\newtheorem{theorem}{Theorem}[section]
\newtheorem{proposition}[theorem]{Proposition}
\newtheorem{corollary}[theorem]{Corollary}
\newtheorem{lemma}[theorem]{Lemma}
\theoremstyle{definition}
\newtheorem{remark}[theorem]{Remark}
\def\!{\mathop{\mathrm{!}}}
\def\x{\textbf{x}}
\def\j{\textbf{j}}
\def\f{\bar{f}}
\def\c{\mathbf{c}}
\newlength{\boxwidth}
\author[1]{Manh Hong Duong}
\author[2]{The Anh Han}
\affil[1]{School of Mathematics, University of Birmingham, Birmingham B15 2TT, UK. Email: h.duong@bham.ac.uk}
\affil[2]{School of Computing and Digital Technologies, Teesside University, TS1 3BX, UK. Email: T.Han@tees.ac.uk}
\title{On equilibrium properties of  the replicator-mutator equation in deterministic and random games}
\begin{document}
\maketitle
\begin{abstract}
In this paper, we study the number of equilibria of the replicator-mutator dynamics for both deterministic and random multi-player two-strategy evolutionary games. For deterministic games, using Decartes' rule of signs, we provide a formula to compute the number of equilibria in multi-player games via the number of change of signs in the coefficients of a polynomial. For two-player social dilemmas (namely, the Prisoner's Dilemma, Snowdrift, Stag Hunt, and Harmony),  we characterize (stable) equilibrium points and analytically calculate the probability of having a certain number of equilibria when the payoff entries are uniformly distributed. For multi-player random games whose payoffs are independently distributed according to a normal distribution, by employing techniques from random polynomial theory, we compute the expected or average  number of internal equilibria. In addition, we  perform extensive simulations by  sampling and averaging over  a large number of possible payoff matrices to compare with and illustrate analytical results. Numerical simulations also suggest several interesting behaviour of the average number of equilibria when the number of players is sufficiently large or when the mutation is sufficiently small. In general, we observe that introducing mutation results in  a larger average number of internal equilibria than when mutation is absent, implying that mutation leads to larger behavioural diversity in dynamical systems. Interestingly, this number is largest when mutation is  rare rather than when it is frequent.    
\end{abstract}
\section{Introduction}
The replicator-mutator dynamics has become a powerful mathematical framework for the modelling and analysis of complex biological, economical and social systems.  It has been employed in the study of, among other applications, population genetics \cite{Hadeler1981}, autocatalytic reaction networks
\cite{StadlerSchuster1992}, language evolution
\cite{Nowaketal2001}, the evolution of cooperation \cite{Imhof-etal2005} and dynamics of behavior in social networks \cite{Olfati2007}. Suppose that in an infinite population there are $n$ types/strategies $S_1,\cdots, S_n$ whose frequencies are, respectively, $x_1,\cdots, x_n$. These types undergo selection; that is, the reproduction rate of each type, $S_i$, is determined by its fitness or average payoff, $f_i$, which is obtained from interacting with other individuals in the population. The interaction of the individuals in the population is carried out within randomly selected groups of $d$ participants (for some integer $d$). That is, they play and obtain their payoffs from a $d$-player game, defined by a payoff matrix. We consider here symmetric games where the payoffs do not depend on the ordering of the players in a group. Mutation is included by adding the possibility that individuals spontaneously change from one strategy to another, which is modeled via a mutation matrix, $Q=(q_{ji}), j,i\in\{1,\cdots,n\}$. The entry $q_{ji}$ denotes the  probability that a player of type $S_j$ changes its type or strategy to $S_i$. The mutation matrix $Q$ is a row-stochastic matrix, i.e.,
\[
\sum_{j=1}^n q_{ji}=1, \quad 1\leq i\leq n.
\]
The replicator-mutator is then given by, see e.g. \cite{Komarova2001JTB,Komarova2004, Komarova2010, Pais2012} 
\begin{equation}
\label{eq: RME}
\dot{x}_i=\sum_{j=1}^n x_j f_j(\x)q_{ji}- x_i \f(\x)=:g_i(x),\qquad  i=1,\ldots, n,
\end{equation}
where $\x = (x_1, x_2, \dots, x_n)$ and $\f(\x)=\sum_{i=1}^n x_i f_i(\x)$ denotes the average fitness of the whole population.  The replicator dynamics is a special instance of \eqref{eq: RME} when the mutation matrix is the identity matrix. 

In this paper we are interested in properties of the equilibrium points of the replicator-mutator dynamics \eqref{eq: RME}. Note that we are concerned with dynamic equilibria almost exclusively. There might be a dynamic equilibrium which is not a Nash equilibrium of the game. These dynamic equilibrium points are solutions of the following system of polynomial equations
\begin{equation}
\label{eq: pols sys}
\begin{cases}
g_i(x)=0, \quad i=1,\ldots, n-1,\\
\sum_{i=1}^n x_i=1.
\end{cases}
\end{equation}
The second condition in \eqref{eq: pols sys}, that is the preservation of the sum of the frequencies, is due to the term $x_i\f(\x)$ in \eqref{eq: RME}. The first condition imposes relations on the fitnesses. We consider both deterministic and random games where the entries of the payoff matrix are respectively deterministic and random variables. Typical examples of deterministic games include pairwise social dilemma  and public goods games that have been studied intensively in the literature, see e.g. \cite{hauert:2002te,santos:2006pn,HanJaamas2016,wang2015universal,perc2017statistical}. On the other hand, random evolutionary games are suitable for modelling social and biological systems in which very limited information is available, or where the environment changes so rapidly and frequently that one cannot describe the payoffs of their inhabitants' interactions \cite{fudenberg:1992bv, gross2009generalized, Galla2013}. 
Simulations and analysis of random games are also helpful for the prediction of the bifurcation of the replicator-mutator dynamics \cite{Komarova2001JTB, Komarova2004, Pais2012}. Here we are mainly interested in the number of equilibria in deterministic games and the expected number of equilibria in random games, which allow predicting the levels of social and biological diversity as well as the overall complexity  in a dynamical system. As in \cite{Komarova2001JTB,Komarova2004,Pais2012}, we consider an independent mutation model that corresponds to a uniform random probability of mutating to alternative strategies as follows:
\begin{equation}
\label{eq: conditionQ}
q_{ij}=\frac{q}{n-1},~~i\neq j,~~q_{ii}=1-q,~~1\leq i,j\leq n.
\end{equation}
In particular, for two-strategy games (i.e., when $n=2$), the above relations read
$$
q_{12}=q_{21}=q,~~ q_{11}=q_{22}=1-q.
$$
The parameter $q$ represents the strength of mutation and ranges from $0$ to $1-\frac{1}{n}$. The two boundaries have interesting interpretation in the context of dynamics of learning \cite{Komarova2004}:  for $q=0$ (which corresponds to the replicator dynamics), learning is perfect and learners always end up speaking the grammar of their teachers. In this case, vertices of the unit hypercube in $\mathbb{R}^n$ are always equilibria. On the other hand, for $q=\frac{n-1}{n}$, the chance for the learner to pick any grammar is the same for all grammars and is independent of the teacher's grammar. In this case, there always exists a uniform equilibrium $\x=(1/n,\ldots, 1/n)$ (cf. Remark \ref{rem: q=1/n}). Equilibrium properties of the replicator dynamics, particularly the probability of observing the maximal number of equilibrium points, the attainability and stability of the patterns of evolutionarily stable strategies have been studied intensively in the literature \cite{broom:1997aa, Broom2000,gokhale:2010pn,HTG12,gokhale2014evolutionary}. More recently, we have provided explicit formulas for the computation of the expected number and the distribution of internal equilibria for the replicator dynamics with multi-player games by employing techniques from both classical and random polynomial theory \cite{DH15, DuongHanJMB2016, DuongTranHanDGA, DuongTranHanJMB}. For the replicator dynamics, that is when there is no mutation, the first condition in \eqref{eq: pols sys} means that all the strategies have the same fitness which is also the average fitness of the whole population. This benign property is no longer valid in the presence of mutation making the mathematical analysis harder. In a general $d$-player $n$-strategy game, each $g_i$ is a multivariate polynomial of degree $d+1$, thus \eqref{eq: pols sys} is a system of multivariate polynomial equations. In particular, for a two-player two-strategy game, which is the simplest case, \eqref{eq: pols sys} reduces to a cubic equation whose coefficients depend on the payoff entries and the mutation strength. For larger $d$ and $n$, solving \eqref{eq: pols sys} analytically is generally impossible  according to Abel's impossibility theorem. Nevertheless, there has been a considerable effort to study equilibrium properties of the replicator-mutator dynamics in deterministic two-player games, see for instance \cite{Komarova2001JTB,Komarova2004, Komarova2010, Pais2012}. In particular, with the mutation strength $q$ as the bifurcation parameter, bifurcations and limit cycles have been shown for various classes of fitness matrices \cite{Komarova2010,Pais2012}. However, equilibrium properties for multi-player games and for random games are much less understood although in the previously mentioned papers, random games were employed to detect and predict certain behaviour of \eqref{eq: RME}.

In this paper, we explore further connections between classical/random polynomial theory and evolutionary game theory developed in \cite{DH15, DuongHanJMB2016, DuongTranHanDGA, DuongTranHanJMB} to study equilibrium properties of the replicator-mutator dynamics. For deterministic games, by using Decartes' rule of signs and its recent developments, we are able to fully characterize the equilibrium properties for social dilemmas. In addition, we provide a method to compute the number of equilibria in multi-player games via the sign changes of the coefficients of a polynomial. For two-player social dilemma games, we calculate the probability of having a certain number of equilibria when the payoff entries are uniformly distributed. For multi-player two-strategy random games whose payoffs are independently distributed according to a normal distribution, we obtain explicit formulas to compute the expected number of equilibria by relating it to the expected number of positive roots of a random polynomial. Interestingly,  due to mutation, the coefficients of the random polynomial become correlated as opposed to the replicator dynamics where they are independent. The case $q=0.5$ turns out to be special and needs different treatment. We also perform extensive simulations by  sampling and averaging over  a large number of possible payoff matrices, to compare with and illustrate analytical results. Moreover, numerical simulations also show interesting behaviour of the expected number of equilibria when the number of players tends to infinity or when the mutation goes to zero. It would be challenging to analyze these asymptotic behaviours rigorously and we leave it for future work.

The rest of the paper is organized as follows. In Section \ref{sec: deterministic} we study deterministic games. In Section \ref{sec: random} we consider random games. Finally, we provide further discussions and outlook in Section \ref{sec: summary}.

\section{Properties of equilibrium points: deterministic games}
\label{sec: deterministic}
In this section, we study properties of equilibrium points of deterministic games. We start with some preliminary results on the roots of a general polynomial that will be used in the subsequent sections. We then focus on two-player games, particularly the social dilemmas. Finally, by employing Decartes' rule of signs and its recent improvement \cite{Avendano2010} we derive a formula to compute the number of equilibria of multi-player games. 
\subsection{Preliminaries}
This section presents some preliminary results on the roots of a polynomial that will be used in the subsequent sections. The following lemma is an elementary characterization of stability of equilibrium points of a dynamical system  where the right hand side is a polynomial.
\begin{lemma} 
Consider a dynamical system $\dot{x}=P(x)=a_n x^n+\ldots+ a_1x+ a_0$ where $a_0,\ldots, a_n$ are real coefficients. Suppose that $P$ has $n$ real roots $x_1<x_2<\ldots<x_n$. Then the stability of these equilibrium points is alternatively switched, that is for all $i=1,\ldots n-1$, if $x_i$ is stable then $x_{i+1}$ is unstable and vice versa. In particular, consider the dynamics $\dot{x}=P(x)=Ax^3+Bx^2+Cx+D$. Suppose that $P(x)$ has three real roots $x_1<x_2<x_3$. Then
\begin{enumerate}[(i)]
\item If $A>0$ then $x_2$ is stable; $x_1$ and $x_3$ are unstable.
\item If $A<0$ then $x_2$ is unstable; $x_1$ and $x_3$ are stable.
\end{enumerate}
\end{lemma}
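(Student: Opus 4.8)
The plan is to invoke the standard linearisation criterion for one-dimensional dynamical systems: at a \emph{simple} equilibrium $x_i$ of $\dot x = P(x)$ (i.e. $P(x_i)=0$, $P'(x_i)\neq 0$), the equilibrium is asymptotically stable when $P'(x_i)<0$ and unstable when $P'(x_i)>0$, since the linearised equation near $x_i$ reads $\dot u = P'(x_i)\,u + O(u^2)$ with $u = x-x_i$. Because $P$ has degree $n$ and $n$ distinct real roots $x_1<\cdots<x_n$, every root is simple, so $P'(x_i)\neq 0$ for all $i$, each equilibrium is hyperbolic, and it suffices to follow how $\operatorname{sign}P'(x_i)$ changes with $i$.

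First I would factor $P(x)=a_n\prod_{j=1}^n (x-x_j)$ and differentiate, obtaining $P'(x_i)=a_n\prod_{j\neq i}(x_i-x_j)$. In this product the factors with $j<i$ are positive and the $n-i$ factors with $j>i$ are negative, hence $\operatorname{sign}P'(x_i)=\operatorname{sign}(a_n)\,(-1)^{\,n-i}$. Therefore $\operatorname{sign}P'(x_{i+1})=-\operatorname{sign}P'(x_i)$ for every $i=1,\dots,n-1$, which is precisely the asserted alternation of stability. (Equivalently, one may argue purely on the phase line: $P$ has constant nonzero sign on each interval between consecutive roots and reverses sign at each simple root, so the inflow/outflow configuration at $x_i$ flips at every step in $i$.)

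For the cubic $\dot x = P(x)=Ax^3+Bx^2+Cx+D$ with roots $x_1<x_2<x_3$, I would simply specialise the formula with $n=3$: $\operatorname{sign}P'(x_1)=\operatorname{sign}P'(x_3)=\operatorname{sign}(A)$ and $\operatorname{sign}P'(x_2)=-\operatorname{sign}(A)$. Thus if $A>0$ then $x_1,x_3$ are unstable while $x_2$ is stable, and if $A<0$ the stabilities are reversed, giving (i) and (ii).

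There is no genuine obstacle here; the only points requiring care are the sign bookkeeping in the product $\prod_{j\neq i}(x_i-x_j)$ and the (mild) remark that a degree-$n$ polynomial with $n$ distinct real roots has all roots simple, so that the linearisation is nondegenerate and the dichotomy $P'(x_i)\gtrless 0$ is decisive.
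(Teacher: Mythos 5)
Your proposal is correct and follows essentially the same route as the paper: factor $P(x)=a_n\prod_{j}(x-x_j)$, compute $P'(x_i)=a_n\prod_{j\neq i}(x_i-x_j)$, and read off $\operatorname{sign}P'(x_i)=\operatorname{sign}(a_n)(-1)^{n-i}$ to get the alternation, with the cubic case as a direct specialisation. The only difference is that you make explicit the linearisation criterion and the simplicity of the roots, which the paper leaves implicit.
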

\begin{proof}
We prove the general case since the cubic case is a direct consequence. Since $P$ has $n$ real roots $x_1,\ldots, x_n$, we have $ P(x)=a_n\prod_{i=1}^n(x-x_i)$. Thus
$$
P'(x)=a_n\sum_{i=1}^n\prod_{j\neq i} (x-x_j). 
$$
Therefore for any $i=1,\ldots, n$, we obtain
$$
P'(x_i)=a_n \prod_{j\neq i}(x_i-x_j).
$$
Since $x_1<\ldots<x_n$, we have for any $i=1,\ldots, n-1$,
$$
\mathrm{sign}(P'(x_i))=\mathrm{sign}\Big(a_n (-1)^{n-i}\Big)\quad\text{and}\quad \mathrm{sign}(P'(x_{i+1}))=\mathrm{sign}\Big(a_n (-1)^{n-i-1}\Big)=-\mathrm{sign}(P'(x_i)),
$$
which implies that $P'(x_i)$ and $P'(x_{i+1})$ have alternative signs. Thus their stability is alternative switched. 
\end{proof}
The following lemma specifies the location of roots of a quadratic equation whose proof is omitted. 
\begin{lemma}
\label{lem: quadratic}
Consider a quadratic equation $f(x)=ax^2+bx+c$. Define $\Delta=b^2-4ac$. Then
\begin{enumerate}[(i)]
\item Exactly one of the roots lies in a given interval $(m_1,m_2)$ if $f(m_1)f(m_2)<0$.
\item Both roots are greater than a given number $m$ if 
$$
\Delta\geq 0,\quad -\frac{b}{2a}>m\quad\text{and}\quad a f(m)>0.
$$
\item Both roots are less than a given number $m$ if 
$$
\Delta\geq 0,\quad -\frac{b}{2a}<m\quad\text{and}\quad a f(m)>0.
$$
\item Both roots lie in a given interval $(m_1,m_2)$ if
$$
\Delta\geq 0,\quad m_1<-\frac{b}{2a}<m_2,\quad af(m_1)>0\quad\text{and}\quad af(m_2)>0.
$$
\end{enumerate}
\end{lemma}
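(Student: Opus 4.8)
The plan is to work throughout with the factored form of $f$ and reduce every claim to elementary sign bookkeeping. When $\Delta = b^2 - 4ac \geq 0$ the quadratic has two real roots $x_1 \leq x_2$ (coinciding when $\Delta = 0$), so $f(x) = a(x-x_1)(x-x_2)$ and, for every real $m$,
\[
\frac{f(m)}{a} = (m - x_1)(m - x_2), \qquad -\frac{b}{2a} = \frac{x_1 + x_2}{2}.
\]
Hence the sign of $a f(m)$ records whether $m$ lies strictly between the two roots or strictly outside them, while $-b/(2a)$ is exactly the midpoint of $x_1$ and $x_2$. I would establish each of the four statements by combining these two facts.

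For (i) I would first note that $f(m_1) f(m_2) < 0$ makes $f$ take opposite signs at $m_1$ and $m_2$, so by the intermediate value theorem at least one root lies in $(m_1,m_2)$; in particular $f$ is not of constant sign, so $\Delta > 0$ and the two roots are real and distinct. If both roots were in $(m_1,m_2)$, then $m_1$ and $m_2$ would both be outside the root interval, giving $a f(m_1) > 0$ and $a f(m_2) > 0$ and contradicting $f(m_1)f(m_2)<0$. Since a quadratic has at most two real roots, exactly one lies in $(m_1,m_2)$.

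For (ii), the hypothesis $\Delta \geq 0$ supplies real roots $x_1 \leq x_2$, and $a f(m) > 0$ says $(m-x_1)(m-x_2) > 0$, so $m$ is not between the roots: either $m < x_1 \leq x_2$ or $x_1 \leq x_2 < m$. The second case would force the midpoint $-b/(2a) = (x_1+x_2)/2$ to be $< m$, contradicting $-b/(2a) > m$; hence $m < x_1 \leq x_2$, i.e. both roots exceed $m$. Part (iii) is identical with the inequalities about $m$ reversed. For (iv), I would run the argument of (ii) at $m = m_1$ (using $a f(m_1) > 0$ and $-b/(2a) > m_1$) to obtain $m_1 < x_1 \leq x_2$, and the argument of (iii) at $m = m_2$ (using $a f(m_2) > 0$ and $-b/(2a) < m_2$) to obtain $x_1 \leq x_2 < m_2$; together these place both roots in $(m_1,m_2)$.

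I do not anticipate a genuine obstacle here: once $f$ is written in factored form, each part is a short sign argument plus the midpoint identity. The one place deserving a little care is the word \emph{exactly} in (i), where the possibility of two interior roots must be excluded — handled above by observing that two interior roots would make both $a f(m_1)$ and $a f(m_2)$ positive, against the hypothesis $f(m_1)f(m_2)<0$.
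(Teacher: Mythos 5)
The paper gives no proof of this lemma at all---it is stated with the remark ``whose proof is omitted''---so there is nothing to compare your argument against; what matters is only whether your proof is correct, and it is. Writing $f(x)=a(x-x_1)(x-x_2)$ under $\Delta\geq 0$, reading off $af(m)>0$ as ``$m$ lies strictly outside $[x_1,x_2]$'', and using $-b/(2a)=(x_1+x_2)/2$ to decide on which side, is exactly the standard route, and you handle the two points that actually need care: in (i) you rule out the case of two interior roots (which would force $af(m_1)>0$ and $af(m_2)>0$, hence $f(m_1)f(m_2)>0$), and in (ii)--(iv) you use the vertex condition to exclude the wrong branch of ``$m$ outside the roots''. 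The only implicit hypothesis worth flagging is $a\neq 0$, which is built into the word ``quadratic'' and is verified in the paper each time the lemma is applied (e.g.\ $a=T+S-1\neq 0$ in the SH- and PD-games). Your proof would serve as a complete replacement for the omitted one.
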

\subsection{Two-player games}
\label{sec: 2player}
We first consider the case of two-player games. Let $\{a_{jk}\}_{j,k=1}^n$ be the payoff matrix where $j$ is the strategy of the focal player and $k$ is that of the opponent. Then the average payoff of strategy $j$ and of the whole population are given respectively by
\begin{equation}
\label{eq: average payoff j}
f_j(\x)=\sum_{k=1}^n x_k a_{jk}\quad\text{and}\quad \f(\x)=\sum_{j=1}^n x_j f_j(\x)=\sum_{j,k=1}^n a_{jk}x_jx_k.
\end{equation}
Substituting \eqref{eq: average payoff j} into \eqref{eq: RME} we obtain
\begin{equation}
\label{eq: RME two-player}
\dot{x}_i=\sum_{j,k=1}^n q_{ji} x_jx_k a_{jk}-x_i\sum_{j,k=1}^n a_{jk}x_jx_k.
\end{equation}
In particular, for two-player two-strategy games the replicator-mutator equation is
\begin{multline}
\label{eq: 2-2 games 1}
\dot{x}=q_{11}a_{11}x^2+q_{11}x(1-x)a_{12}+q_{21}x(1-x)a_{21}+q_{21}a_{22}(1-x)^2\\-x\Big(a_{11}x^2+(a_{12}+a_{21})x(1-x)+a_{22}(1-x)^2\Big),
\end{multline}
where $x$ is the frequency of the first strategy and $1-x$ is the frequency of the second one.
Using the identities $q_{11}=q_{22}=1-q, \quad q_{12}=q_{21}=q$, Equation \eqref{eq: 2-2 games 1} becomes
\begin{align}
\dot{x}&=\Big(a_{12}+a_{21}-a_{11}-a_{22}\Big)x^3+\Big(a_{11}-a_{21}-2(a_{12}-a_{22})+q(a_{22}+a_{12}-a_{11}-a_{21})\Big)x^2\nonumber
\\&\quad+\Big(a_{12}-a_{22}+q(a_{21}-a_{12}-2a_{22})\Big)x+q a_{22}.\label{eq: 2-2 games}
\end{align}
The properties of equilibrium points for the case $q=0$ are well-understood, see e.g. \cite{gokhale2014evolutionary}. Thus we consider $0<q\leq 1/2$. In addition, equilibria of  \eqref{eq: 2-2 games} and their stability for the case $a_{11}=a_{22}=1, a_{12}\leq a_{21}\leq 1$ have been studied in \cite{Komarova2010}.
\paragraph{Two-player social dilemma games.}
We first consider two-player social dilemma games. We adopt the following parameterized payoff matrix to study the full space of two-player social dilemma games where the first strategy is cooperator and second is defector \cite{santos:2006pn,wang2015universal}, $a_{11} = 1; \  a_{22} = 0; $
$0 \leq a_{21} = T \leq 2$ and $-1 \leq a_{12} = S \leq 1$, that covers the following games
\begin{enumerate}[(i)]
\item the Prisoner's Dilemma (PD): $2\geq T > 1 > 0 > S\geq -1$,
\item the Snow-Drift (SD) game: $2\geq T > 1 > S > 0$,
\item the Stag Hunt (SH) game: $1 > T  > 0 > S\geq -1$,
\item the Harmony (H) game: $1  > T\geq 0, 1\geq S > 0$.
\end{enumerate}
Note that in the SD-game: $S+T>1$, in the SH-game: $S+T<1$. By simplifying the right hand side of \eqref{eq: 2-2 games}, equilibria of a social dilemma game are roots in the interval $[
0,1]$ of the following cubic equation 
\begin{align}
\Big(T+S-1\Big)x^3+\Big(1-T-2S+q(S-1-T)\Big)x^2+\Big(S+q(T-S)\Big)x = 0.\label{eq: 2-2 games2}
\end{align}
It follows that $x = 0$ is always an equilibrium. If $q=\frac{1}{2}$ then the above equation has two solutions $x_1=\frac{1}{2}$ and $x_2=\frac{T+S}{T+S-1}$. In PD, SD and H-games, $x_2\not \in (0,1)$, thus they have two equilibria $x_0=0$ and $x_1=\frac{1}{2}$. In the SH-game: if $T+S<0$ then the game has three equilibria $x_0=0, x_1=\frac{1}{2}$ and $0<x_2<1$; if $T+S\geq 0$ then the game has only two equilibria $x_0=0, x_1=\frac{1}{2}$.

We consider $q\neq \frac{1}{2}$. For non-zero equilibrium points we solve the following quadratic equation
\begin{equation}
\label{eq: social dilemma}
h(x):=(T+S-1)x^2+(1-T-2S+q(S-1-T))x+S+q(T-S)=:ax^2+bx+c=0.
\end{equation}
Note that  we have $h(1)=-q<0$ for all the above games. In the SD-game, since $T+S-1>0$ and $h(0)=S+q(T-S)=qT+S(1-q)>0$, $h$ is a quadratic and has two positive roots $0<x_1<1<x_2$. Thus the SD-game always has two equilibria: an unstable one $x_0=0$, and a stable one $0<x_1<1$. For the H-game, 
\begin{enumerate}[(i)]
\item If $S+T=1$ then $h$ becomes $h(x)=-(2Tq+1-T)x+(2qT+1-T)-q$ and has a root $x=1-\frac{q}{2Tq+1-T}$. If $q(1-2T)<1-T$ then $x\in (0,1)$ and the game has two equilibria: an unstable one $x_0=0$, and a stable one $0<x_1<1$. If $q(1-2T)\leq 1-T$ then $x<0$ and the game has only one equilibrium $x=0$.
\item if $S+T>1$, then since $h(0)=S+q(T-S)=qT+S(1-q)>0$, $h$ has two roots $0<x_1<1<x_2$; thus the game has two equilibria: an unstable one $x_0=0$, and a stable one $0<x_1<1$.
\item if $S+T<1$ then since $h(0)=S+q(T-S)=qT+S(1-q)>0$, $h$ has two roots $x_2<0<x_1<1$; thus the game has two equilibria: an unstable one $x_0=0$ and a stable one $0<x_1<1$.
\end{enumerate}
Thus the H-game has either $1$ equilibrium or $2$ equilibria. The analysis for the SH-game and the PD-game is more involved since we do not know the sign of $h(0)$.

\textbf{SH-game}. Since $T+S<1$, $h$ is always a quadratic polynomial. Define
\begin{align}
\Delta&=(1-T-2S+q(S-1-T))^2-4(T+S-1)(S+q(T-S)), \label{eq:delta}
\\m&:=-\frac{b}{2a}=\frac{T+2S-1-q(S-T-1)}{2(T+S-1)}=1+\frac{1-T+q(T+1-S)}{2(T+S-1)}.\label{eq: b2a}
\end{align}
Since $T+S-1<0$ and $1-T+q(T+1-S)>0$, we have $m<1$. Applying Lemma \ref{lem: quadratic}, it results in  the following cases: 
\begin{enumerate}[(i)]
\item If $\Delta<0$, then the game has only one equilibrium $x_0=0$ which is stable if $S+q(T-S)<0$ and is unstable if $S+q(T-S)>0$.
\item If $\Delta\geq 0$ and $h(0)>0$, then the game has two equilibria: an unstable one $x_0=0$ and a stable one $0<x_1<1$.
\item If $\Delta\geq 0$ and $h(0)<0$ and $-\frac{b}{2a}>0$ then the game has three equilibria $x_0=0<x_1<x_2<1$ where $x_0$ and $x_2$ are stable while $x_1$ is unstable.
\item If $\Delta\geq 0$ and $h(0)<0$ and $-\frac{b}{2a}<0$ then the game has only one stable equilibrium $x_0=0$.
\end{enumerate}

\textbf{PD-game}. It remains to consider the PD-game. If $S+T=1$ then $h$ becomes $h(x)=-(2Tq+1-T)x+(2qT+1-T)-q$ and has a root $\bar{x}=1-\frac{q}{2Tq+1-T}$. Thus the game has only one equilibrium $x_0=0$ if $\bar{x}\not\in (0,1)$ and has two equilibria if $\bar{x}\in (0,1)$. If $S+T\neq 1$, then $h$ is a quadratic polynomial. Let $\Delta$ and $m$ be defined as in \eqref{eq:delta}-\eqref{eq: b2a}. According to Lemma \ref{lem: quadratic}, we have the following cases: 
\begin{enumerate}[(i)]
\item If $\Delta<0$ then $h$ has no real roots. Thus the game only has one equilibrium $x_0=0$.
\item If $\Delta\geq 0$ and $h(0)=qT+S(1-q)>0$ then $h$ has exactly one root in $(0,1)$. Thus the game has two equilibria.
\item If $\Delta\geq 0,\quad 0<\frac{T+2S-1-q(S-T-1)}{2(T+S-1)}<1, ah(0)=(T+S-1)(qT+S(1-q))>0, \quad\text{and}\quad ah(1)=-q(T+S-1)>0$ then $h$ has two roots in $(0,1)$. Thus the game has three equilibria.
\item In other cases, $h$ has two roots but do not belong to $(0,1)$. Thus the game has only one equilibrium at $x_0=0$. 
\end{enumerate}
For comparison, we consider the case $q=0$. Equation \eqref{eq: 2-2 games2} becomes
$$
(T+S-1)x^3+(1-T-2S)x^2+Sx= x(1-x)(S-(T+S-1)x)=0,
$$
which implies 
$$
x_0=0,~~x_1=1,~~x_2=\frac{S}{T+S-1}.
$$
The condition $0<x_2<1$ is equivalent to 
$$
S(S+T-1)>0\quad\text{and}\quad (1-T)(S+T-1)<0,
$$
which is satisfied in the SD-game and the SH-game but is violated in the PD-game and the H-game. In the SD-game $S+T>1$ and $0=x_1<x_2<1=x_1$, thus $x_2$ is stable and $x_0$ and $x_1$ are unstable. In the SH-game, $S+T<1$ and $0=x_1<x_2<1=x_1$, thus $x_2$ is unstable, $x_1$ and $x_3$ are stable. The PD-game and the H-game have only two equilibria: for the PD-game $x_0=0$ (stable) and $x_1=1$ (unstable), for the H-game: $x_0=0$ (unstable) and $x_1=1$ (stable).

\paragraph{General games.}
Now we consider a general two-player two-strategy game where there is no ranking on the coefficients. An equilibrium point is a root $x\in(0,1)$ of the cubic on the right-hand side of \eqref{eq: 2-2 games 1}
\begin{multline}
\Big(a_{12}+a_{21}-a_{11}-a_{22}\Big)x^3+\Big(a_{11}-a_{21}-2(a_{12}-a_{22})+q(a_{22}+a_{12}-a_{11}-a_{21})\Big)x^2\nonumber
\\+\Big(a_{12}-a_{22}+q(a_{21}-a_{12}-2a_{22})\Big)x+q a_{22}=0
\end{multline}
We define $t:=\frac{x}{1-x}$. Dividing the above equation by $(1-x)^3$ and using the relation $\frac{1}{1-x}=1+t$, the above equation can be written in $t$-variable as
\begin{align*}
P_3(t)&=-a_{11}qt^3 +(a_{12}-a_{21}+q(a_{21}q-a_{11}-a_{12}))t^2+(a_{12}-a_{22}+q(a_{21}+a_{22}-a_{12}))t+a_{22}q
\\&:=a t^3+bt^2+ct+d.
\end{align*}
The number of equilibria of the $2\times 2$-game is equal to the number of positive roots of the cubic $P_3$. Applying Sturm's theorem, see for instance \cite[Theorem 1.4]{Sturmfelds2002}, to the polynomial $P_3$ for the interval $(0,+\infty)$, where  the sign at $+\infty$ of a polynomial is the same as the sign of its leading coefficient, we obtain the following result
\begin{lemma}
Let $s_1$ and $s_2$ be respectively the number of changes of signs in the following sequences
\begin{align*}
&\Big\{d,c,\frac{bc-9ad}{a}, \Delta \Big\},
\\&\Big\{a, \frac{b^2-3ac}{a},\Delta\Big\},
\end{align*}
where $\Delta:=a\big(18 abcd-4b^3 d+b^2c^2-4ac^3-27a^2d^2\big)$ denotes the radicand. Then $P_3$ has exactly $s_1-s_2$ number of positive roots. As consequences
\begin{enumerate}[(i)]
\item  $P_3$ has three distinct real positive roots (thus the game has 3 equilibria) if and only if
$$
\begin{cases}
\Delta> 0,\\ ab<0, \\ac>0,\\ ad<0.
\end{cases} 
$$
\item If there is no change of sign in the sequence of polynomial's coefficients then there is no positive root. That is if 
$$
\begin{cases}
ab>0\\ bc>0\\ cd>0
\end{cases}
$$
then $P_3$ has no positive root (thus the game has no equilibria).
\end{enumerate}
\end{lemma}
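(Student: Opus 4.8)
The plan is to invoke Sturm's theorem on the interval $(0,+\infty)$, exactly as the cited reference \cite{Sturmfelds2002} permits. Recall the statement: if $P$ is a squarefree real polynomial and $\gamma<\delta$ are not roots of $P$, then the number of distinct real roots of $P$ in $(\gamma,\delta)$ equals $V(\gamma)-V(\delta)$, where $V(\xi)$ is the number of sign changes in the Sturm sequence $P_0=P$, $P_1=P'$, $P_{k+1}=-\operatorname{rem}(P_{k-1},P_k)$ evaluated at $\xi$, and where at $+\infty$ the sign of $P_k$ is taken to be the sign of its leading coefficient. So first I would compute the Sturm chain of $P_3(t)=at^3+bt^2+ct+d$. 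One division gives $P_1=3at^2+2bt+c$; a second gives $P_2=\tfrac{2(b^2-3ac)}{9a}\,t+\tfrac{bc-9ad}{9a}$; a third gives the bottom constant, whose sign I would identify with that of $\Delta=a\big(18abcd-4b^3d+b^2c^2-4ac^3-27a^2d^2\big)$ --- that is, $a$ times the discriminant of the cubic, which is precisely the constant produced at the end of the Euclidean chain (the factor $a$ is exactly what is needed to make the identification sign-correct regardless of $\operatorname{sgn}a$).

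Second, I would evaluate the chain at the two endpoints. At $t=0$ it reads $\{d,\ c,\ \tfrac{bc-9ad}{9a},\ (\text{bottom const})\}$, which has the same sign pattern as $\{d,c,\tfrac{bc-9ad}{a},\Delta\}$; hence $V(0)=s_1$. At $t=+\infty$ the leading-coefficient signs are $\{\operatorname{sgn}a,\ \operatorname{sgn}3a,\ \operatorname{sgn}\tfrac{2(b^2-3ac)}{9a},\ \operatorname{sgn}\Delta\}$; the first two always coincide (both equal $\operatorname{sgn}a$) and so contribute no variation, whence $V(+\infty)$ equals the number of sign changes of $\{a,\tfrac{b^2-3ac}{a},\Delta\}$, i.e. $V(+\infty)=s_2$. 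Sturm's theorem then gives that the number of positive roots of $P_3$ is $V(0)-V(+\infty)=s_1-s_2$, which is the assertion. The degenerate cases --- $\Delta=0$ (a repeated root), $d=0$ ($0$ is a root), $a=0$ (the degree drops) --- are non-generic in the payoff parameters and can be treated by a short separate check, so I would mention them but not dwell on them.

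Third, the two consequences drop out by identifying the extremal sign counts. The sequence for $s_1$ has four terms and the one for $s_2$ three (after the repeated $a$ has collapsed), so $s_1\le 3$ and $s_2\ge 0$; thus $s_1-s_2=3$ forces $s_1=3$ and $s_2=0$. A cubic with three distinct positive roots is, by Descartes' rule of signs together with the discriminant test, exactly a cubic with positive discriminant whose coefficient sequence $a,b,c,d$ strictly alternates in sign (three positive and no negative real root, the latter via the $t\mapsto -t$ substitution); strict alternation of $a,b,c,d$ is equivalent to $ab<0$, $ac>0$, $ad<0$ (and the converse direction uses Vieta: three reals whose elementary symmetric functions $-b/a,\ c/a,\ -d/a$ are all positive must all be positive), and one checks that together with $\Delta>0$ this is equivalent to $s_1=3,\ s_2=0$, giving (i). For (ii): $ab>0$, $bc>0$, $cd>0$ says $a,b,c,d$ has no sign change, so Descartes' rule gives no positive root (equivalently, one verifies directly from the chain that $s_1=s_2$ in this case).

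The part I expect to be the real work is the sign bookkeeping in the Euclidean chain --- in particular, pinning down that the bottom constant of the chain has exactly the sign of $\Delta$ as defined (which is why $a$ is built into $\Delta$), and checking that the successive rescalings by $9a$, by $b^2-3ac$, etc., which may be negative, leave $V(0)$ and $V(+\infty)$ equal to the sign-change counts of the precise sequences written in the statement rather than of sign-twisted variants. Once the chain is pinned down, the rest is routine.
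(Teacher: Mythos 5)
Your route is the same as the paper's: the paper offers no argument beyond the sentence ``applying Sturm's theorem to $P_3$ on $(0,+\infty)$'', and your plan supplies exactly the computation that sentence elides. The chain you describe is correct: $P_2=\tfrac{2(b^2-3ac)}{9a}t+\tfrac{bc-9ad}{9a}$, and the terminal constant $-\mathrm{rem}(P_1,P_2)$ does have the sign of $\Delta=a\cdot\mathrm{disc}(P_3)$ (one finds $3a\beta^2-2b\alpha\beta+c\alpha^2=-\mathrm{disc}/(9a)$ with $\alpha,\beta$ the coefficients of $P_2$, so the built-in factor $a$ is indeed exactly what makes the identification sign-correct). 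The identifications $V(0)=s_1$ and $V(+\infty)=s_2$, using that positive rescalings and the repeated $\mathrm{sign}(a)$ do not affect the variation count, are right, so the main assertion and part (ii) are in order.

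The genuine gap is in part (i), precisely at the step you pass over with ``one checks that together with $\Delta>0$ this is equivalent to $s_1=3$, $s_2=0$''. That check fails: you conflate ``positive discriminant'' with ``$\Delta>0$'', but $\Delta=a\cdot\mathrm{disc}$, so the two agree only when $a>0$, and no sign restriction on $a=-a_{11}q$ is in force. For $a<0$ the displayed condition in (i) is neither necessary nor sufficient. Concretely, $P_3(t)=-t^3+t^2-t+1=-(t-1)(t^2+1)$ has $a=-1$, $b=1$, $c=-1$, $d=1$, hence $ab<0$, $ac>0$, $ad<0$ and $\mathrm{disc}=18-4+1-4-27=-16$, so $\Delta=(-1)(-16)=16>0$; all four conditions hold, yet $P_3$ has a single positive root (indeed $s_1=2$, $s_2=1$). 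Conversely, if $a<0$ and $P_3$ has three distinct positive roots, then $\mathrm{disc}>0$ forces $\Delta<0$, so the ``only if'' direction fails too. The correct hypothesis is $\mathrm{disc}>0$ (equivalently $a\Delta>0$) together with the alternation $ab<0$, $ac>0$, $ad<0$; with that replacement your Descartes-plus-discriminant argument for (i) goes through. This is arguably a defect of the statement as printed, but your ``one checks'' conceals the problem rather than exposing it, so as written the proof of (i) does not close.
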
 
\begin{remark}
\label{rem: q=1/n}
In this remark we show that in the case $q=\frac{n-1}{n}$ the point $\x=(1/n,\ldots, 1/n)$ is always an equilibrium of the general replicator-mutator dynamics regardless of the type of games and of the payoff functions. In fact, since $q=\frac{n-1}{n}$, we have 
$$
q_{ji}=\frac{q}{n-1}=\frac{1}{n}, ~q_{ii}=1-q=\frac{1}{n}.
$$
Substituting this into the formula of $g_i$ in \eqref{eq: RME} we obtain
\begin{align*}
g_i(\x)=\frac{1}{n}\sum_{j=1}^n x_j f_j(\x)-x_i\bar{f}(\x)=(1/n- x_i)\bar{f}(\x).
\end{align*}
Thus the replicator-mutator dynamics always has an \textit{uniform equilibrium} $\x=(1/n,\ldots, 1/n)$, see \cite{Pais2012} for the bifurcation analysis of this equilibrium point for the case $d=2$ and $n\geq 3$.
\end{remark}

\subsection{Muti-player games}
In this section, we focus on the replicator-mutator equation for $d$-player two-strategy games with a symmetric mutation matrix $Q=(q_{ji})$ (with  $j,i\in\{1,2\}$) so that
\[
q_{11}=q_{22}=1-q \quad \text{and}\quad q_{12}=q_{21}=q,
\]
for some constant $0\leq q\leq 1/2$. Note that this is a direct consequence of Equation \eqref{eq: conditionQ} and is not an additional restriction/assumption. Let $x$ be the frequency of $S_1$. Thus the frequency of $S_2$ is $1-x$. The interaction of the individuals in the population is in randomly selected groups of $d$ participants, that is, they play and obtain their fitness from  $d$-player games. Let $a_k$ (resp., $b_k$) be the payoff of an $S_1$-strategist (resp., $S_2$) in a group  containing  other $k$ $S_1$ strategists (i.e. $d-1-k$ $S_2$ strategists). Here we consider symmetric games where the payoffs do not depend on the ordering of the players. In this case, the average payoffs of $S_1$ and $S_2$ are, respectively 
\begin{equation}
\label{eq: fitness}
f_1(x)= \sum\limits_{k=0}^{d-1}a_k\begin{pmatrix}
d-1\\
k
\end{pmatrix}x^k (1-x)^{d-1-k}\quad\text{and}\quad
f_2(x)= \sum\limits_{k=0}^{d-1}b_k\begin{pmatrix}
d-1\\
k
\end{pmatrix}x^k (1-x)^{d-1-k}.
\end{equation} 
The replicator-mutator equation \eqref{eq: RME} then becomes
\begin{align}
\label{eq: RME2}
\dot{x}&=x f_1(x)(1-q)+(1-x) f_2(x)q-x(x f_1(x)+(1-x)f_2(x))\notag
\\&=q\Big[(1-x)f_2(x)-x f_1(x)\Big]+x(1-x)(f_1(x)-f_2(x)).
\end{align}
Note that when $q=0$ we recover the usual replicator equation (i.e. without mutation). In contrast to the replicator equation, $x=0$ and $x=1$ are no longer equilibrium points of the system for $q\neq 0$. In addition, according to Remark \ref{rem: q=1/n} if $q=\frac{1}{2}$ then $x=\frac{1}{2}$ is always an equilibrium point.

Equilibrium points are those points $0\leq x\leq 1$ that make the right-hand side of \eqref{eq: RME2} vanish, that is
\begin{equation}
\label{eq: equilibria1}
q\Big[(1-x)f_2(x)-x f_1(x)\Big]+x(1-x)(f_1(x)-f_2(x))=0.
\end{equation}
Using \eqref{eq: fitness}, Eq. \eqref{eq: equilibria1} becomes
\begin{multline}
\label{eq: equilibria2}
q\bigg[\sum_{k=0}^{d-1}b_k\begin{pmatrix}
d-1\\k
\end{pmatrix} x^k(1-x)^{d-k}-\sum_{k=0}^{d-1}a_k\begin{pmatrix}
d-1\\k
\end{pmatrix} x^{k+1}(1-x)^{d-1-k}\bigg]
\\+\sum_{k=0}^{d-1}\beta_k \begin{pmatrix}
d-1\\k
\end{pmatrix} x^{k+1}(1-x)^{d-k}=0,
\end{multline}
where $\beta_k:=a_k-b_k$. Now setting $t:=\frac{x}{1-x}$ then dividing \eqref{eq: equilibria2} by $(1-x)^{d+1}$ and using the relation that $(1+t)=\frac{1}{1-x}$, we obtain
\begin{equation}
q(1+t)\Big[\sum_{k=0}^{d-1}b_k\begin{pmatrix}
d-1\\k
\end{pmatrix}t^k-\sum_{k=0}^{d-1}a_k\begin{pmatrix}
d-1\\k
\end{pmatrix}t^{k+1}\Big]+ \sum_{k=0}^{d-1}\beta_k \begin{pmatrix}
d-1\\k
\end{pmatrix} t^{k+1}=0.
\end{equation}
By regrouping terms and changing the sign, we obtain the following polynomial equation in $t$-variable
\begin{equation}
\label{eq: P}
P(t):=\sum_{k=0}^{d+1}c_k t^k=0,
\end{equation}
where the coefficient $c_k$ for $k=0,\cdots, d+1$ is given by
\begin{equation}
\label{eq: c}
c_k:=\begin{cases}
-qb_0~~\text{for}~~k=0,\\
(q-1)(a_0-b_0)-q(d-1)b_1~~\text{for}~~k=1,\\
q a_{k-2}\begin{pmatrix}
d-1\\k-2
\end{pmatrix}+(q-1)(a_{k-1}-b_{k-1})\begin{pmatrix}
d-1\\k-1
\end{pmatrix}-qb_k\begin{pmatrix}
d-1\\k
\end{pmatrix}~~\text{for}~~ k=2,\ldots, d-1,\\
(q-1)(a_{d-1}-b_{d-1})+qa_{d-2}(d-1)~~\text{for}~~k=d,\\
qa_{d-1}~~\text{for}~~ k=d+1.
\end{cases}
\end{equation}
Thus the number of equilibria of $d$-player two-strategy games is the same as the number of positive roots of the polynomial $P$. We now use Decartes' rule of signs to count the latter. Let $\c:=\{c_0,c_1,\ldots, c_{d+1}\}$ be the sequence of coefficients given in \eqref{eq: c}. Applying Decartes' rule of signs we obtain the following result.

\begin{lemma}
The number of positive roots of $P$, which is also the number of equilibria of the $d$-player two-strategy replicator-mutator dynamics, is either equal to the number of sign changes of $\c$ or is less than it by an even amount.
\end{lemma}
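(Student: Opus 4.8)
The plan is to derive the statement as an immediate consequence of Descartes' rule of signs, the only genuine work having already been carried out in the reduction of the equilibrium problem to the polynomial $P$.

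First I would recall that reduction explicitly. The substitution $t=\frac{x}{1-x}$ defines a strictly increasing bijection from the open interval $(0,1)$ onto $(0,+\infty)$; under it, an interior equilibrium $x\in(0,1)$ of \eqref{eq: RME2} corresponds to a root $t\in(0,+\infty)$ of $P$, because \eqref{eq: equilibria2} is obtained from \eqref{eq: equilibria1} by multiplying through by the positive quantity $(1-x)^{d+1}$ and rearranging, with no common factor of $t$ or $(1+t)$ having been cancelled. Since for $0<q\le\tfrac12$ neither $x=0$ nor $x=1$ is an equilibrium of \eqref{eq: RME2} (the relevant terms $x\,f_1(x)$ and $(1-x)f_2(x)$ survive because $q\neq0$), the number of equilibria equals the number of positive roots of $P$, with no loss at the endpoints.

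Next I would observe that $P(t)=\sum_{k=0}^{d+1}c_k t^k$ has real coefficients, since each $c_k$ in \eqref{eq: c} is a real linear combination of the real payoff entries $a_k,b_k$ with coefficients depending on $q$. Descartes' rule of signs then asserts that the number of positive real roots of a real polynomial, counted with multiplicity, is either equal to the number of sign changes in the sequence of its coefficients---vanishing coefficients being skipped---or is smaller than it by a nonnegative even integer. Applying this to $P$ with coefficient sequence $\c=\{c_0,c_1,\ldots,c_{d+1}\}$ yields exactly the claimed relationship between the number of positive roots of $P$, equivalently the number of equilibria, and the number of sign changes of $\c$.

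There is no substantial obstacle here: the entire mathematical content is the translation, performed above, of the dynamical equilibrium condition into the polynomial equation $P(t)=0$, after which the conclusion is a textbook application of Descartes' rule. The only points requiring a moment's care are the two already noted---that the bijection $x\leftrightarrow t$ neither drops boundary equilibria nor introduces spurious roots, so that the two counts coincide (with multiplicity as appropriate), and that the rule must be applied to the sequence of \emph{nonzero} coefficients, which matters when some $c_k$ vanishes, in particular when the leading term $c_{d+1}=qa_{d-1}$ or the constant term $c_0=-qb_0$ is zero so that the effective degree of $P$ drops below $d+1$.
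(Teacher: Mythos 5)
Your argument is correct and follows exactly the route the paper takes: the equilibrium count is reduced, via the substitution $t=\frac{x}{1-x}$, to counting positive roots of the real polynomial $P$ with coefficient sequence $\c$, after which the statement is a direct application of Descartes' rule of signs (the paper gives no further proof). Your additional care about skipped zero coefficients and about the endpoints $x=0$, $x=1$ not being equilibria for $q\neq 0$ matches the paper's own remarks and introduces no discrepancy.
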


In \cite{PENA2014}, the author has employed a similar approach to study the number of equilibria for the standard replicator dynamics, in which $P$ turns out to be a Bernstein polynomial and many useful properties of Bernstein polynomials were exploited. In the following remark we show that the polynomial $P$ can also be written in the form of a Bernstein polynomial.  
\begin{remark}
\label{re: Bernstein}
Using the identities B.4 and B.5 in \cite{Pena2015} we can write $q[(1-x)f_2(x)-xf_1(x)]$ as a polynomial in Bernstein form of degree $d$ (call it $P_1(x)$), and similarly $x(1-x)(f_1(x)-f_2(x))$ as a polynomial in Bernstein form of degree $d+1$ (call it $P_2(x)$), as follows 
\begin{equation*}
P_1(x) = q[(1-x)f_2(x)-xf_1(x)]  = q \left( \sum\limits_{k=0}^{d}\frac{b_k (d-k)  - k a_{k-1}}{d} \begin{pmatrix}
d\\
k
\end{pmatrix}x^k (1-x)^{d-k} \right),
\end{equation*} 
\begin{equation*}
P_2(x) = x(1-x)(f_1(x)-f_2(x)) =  \sum\limits_{k=0}^{d+1}\frac{k(d+1-k)(  a_{k-1} -  b_{k-1} )}{d(d+1)}  \begin{pmatrix}
d+1\\
k
\end{pmatrix}x^k (1-x)^{d+1-k}.
\end{equation*} 
Using the following identity, obtained by multiplying the polynomial by $((1-x) + x)$,

\begin{equation*}
\sum\limits_{k=0}^{d}c_k\begin{pmatrix}
d\\
k
\end{pmatrix}x^k (1-x)^{d+1-k}  = \sum\limits_{k=0}^{d+1} \frac{ (d+1-k) c_k + k c_{k-1} }{d+1}\begin{pmatrix}
d+1\\
k
\end{pmatrix}x^k (1-x)^{d+1-k},
\end{equation*} 
we have 
\begin{multline*}
P_1(x)  = q \Big(  \sum\limits_{k=0}^{d+1} \frac{ (d+1-k) [(d-k)b_k   - k a_{k-1}] + k [b_{k-1} (d+1-k)  - (k-1) a_{k-2}] }{d(d+1)}\\
\times \begin{pmatrix}
d+1\\
k
\end{pmatrix}x^k (1-x)^{d+1-k} \Big).
\end{multline*} 
Combining the above computations, we have converted $P_1(x) + P_2(x)$ into a polynomial in Bernstein form  
\begin{equation*}
P_1(x) + P_2(x)= \frac{1}{d(d+1)} \sum\limits_{k=0}^{d+1} \rho_k\begin{pmatrix}
d+1\\
k
\end{pmatrix}x^k (1-x)^{d+1-k}, 
\end{equation*} 
where 
\begin{align*}
\rho_k&= k(d+1-k)(  a_{k-1} -  b_{k-1} ) + q \Big((d+1-k) [(d-k)b_k   - k a_{k-1}] 
\\ &\hspace*{6cm}+ k [b_{k-1} (d+1-k)  - (k-1) a_{k-2}] \Big)\\
&= q (d+1-k)(d-k)b_k +  (1-q) (d+1-k)k  (a_{k-1} - b_{k-1}) - q k (k-1) a_{k-2}.
\end{align*}
Direct computations show that (note that we have changed the sign of $c_k$ for notation convenience in the subsequent sections)
$$
\rho_k=-\frac{c_k d(d+1)}{\begin{pmatrix}
d+1\\k
\end{pmatrix}.
}$$
Having written $P$ in the form of a Bernstein polynomial, similar general results on the equilibrium points of the replicator-mutator dynamics as in \cite{PENA2014} could be, in principle, obtained using the link between the sign pattern of the sequence $\boldsymbol{\rho}=\{\rho_0,\ldots, \rho_{d+1}\}$ and the sign pattern and number of
roots of the polynomial $P$. We do not go into further details here and leave this interesting topic for future research.
\end{remark}

For a (real) polynomial $P$ we denote by $S(P)$ the number of changes of signs in the sequence of coefficients of $P$ disregarding zeros and by $R(P)$ the number of positive roots of $P$ counted with multiplicities. Decartes' rule of signs only provides an upper bound for $R(P)$ in terms of $S(P)$.  Recently it has been shown that $R(P)$ can be computed exactly as $S(PQ)$ for some polynomial $Q$ or as a limit of $S((t+1)^n P(t))$ as $n$ tends to infinity.
\begin{theorem}\cite{Avendano2010}
\label{thm: Decartes1}
Let $P$ be a non-zero real polynomial. 
\begin{enumerate}[(i)]
\item There exists a real polynomial $Q$ with all non-negative coefficients such that $S(PQ)=R(P)$.
\item The sequence $S((t+1)^n P(t))$ is monotone decreasing with limit equal to $R(P)$.
\end{enumerate}
\end{theorem}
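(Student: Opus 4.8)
The plan is to obtain part (i) as a corollary of part (ii): once we know that the integer sequence $S\big((t+1)^nP(t)\big)$ is non-increasing with limit $R(P)$, it is eventually equal to $R(P)$, so we may take $Q(t)=(t+1)^{N}$ with $N$ large, and this $Q$ has non-negative coefficients. So the work is in (ii). First I would normalise: dividing $P$ by the largest power of $t$ dividing it changes neither $S$ nor $R$, so assume $P(0)\neq 0$ and put $d=\deg P$. The key tool is the substitution $t=\tfrac{x}{1-x}$, an increasing bijection of $[0,\infty)$ onto $[0,1)$. For $n\ge 0$ and $m:=n+d$ one has
\[
(t+1)^{n}P(t)\Big|_{t=x/(1-x)}\cdot(1-x)^{m}=(1-x)^{d}P\!\left(\tfrac{x}{1-x}\right)=:\widetilde P(x),
\]
and writing $(t+1)^{n}P(t)=\sum_k c_k^{(n)}t^k$ the left-hand side equals $\sum_k c_k^{(n)}x^k(1-x)^{m-k}$, so $c_k^{(n)}=\binom{m}{k}\beta_k^{(m)}$ where $(\beta_k^{(m)})_{k=0}^m$ are the Bernstein coefficients of $\widetilde P$ on $[0,1]$ in degree $m$. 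As $\binom mk>0$, $S\big((t+1)^nP\big)=V_m$, the number of sign changes of $(\beta_k^{(m)})_k$. Moreover the roots of $\widetilde P$ in $(0,1)$ are exactly the positive roots of $P$ with the same multiplicities, while $\widetilde P(0)=P(0)\neq 0$ and $\widetilde P(1)$ equals the leading coefficient of $P$, which is nonzero; hence $R(P)=\sum_{i=1}^p\mu_i$, where $0<x_1<\dots<x_p<1$ are the roots of $\widetilde P$ in $(0,1)$ with multiplicities $\mu_i$.

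For monotonicity: multiplication by $(t+1)$ sends a coefficient sequence $(b_k)$ to $(b_{k-1}+b_k)_k$, which is the action of a totally non-negative (staircase $0/1$) matrix; such maps are variation-diminishing, so $S\big((t+1)Q\big)\le S(Q)$ for every real polynomial $Q$, that is, $V_{m+1}\le V_m$. (Equivalently, in Bernstein terms, degree elevation $\beta_k^{(m+1)}=\tfrac{k}{m+1}\beta_{k-1}^{(m)}+\tfrac{m+1-k}{m+1}\beta_k^{(m)}$ does not increase sign changes.) On the other hand, Decartes' rule applied to $(t+1)^nP$ gives $V_m=S\big((t+1)^nP\big)\ge R\big((t+1)^nP\big)=R(P)$, since $(t+1)^n$ has no positive roots, and $V_m-R(P)$ is even. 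Thus $(V_m)_{m\ge d}$ is a non-increasing sequence of integers bounded below by $R(P)$; it is eventually constant, equal to some $L\ge R(P)$.

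It remains to show $L=R(P)$, and this is the crux. I would invoke the standard uniform asymptotics of Bernstein coefficients,
\[
\beta_k^{(m)}=\widetilde P\!\left(\tfrac{k}{m}\right)-\frac{1}{2m}\cdot\frac{k}{m}\Big(1-\frac{k}{m}\Big)\widetilde P''\!\left(\tfrac{k}{m}\right)+O\!\left(\tfrac{1}{m^{2}}\right),
\]
uniformly in $0\le k\le m$. On any compact subset of $[0,1]$ disjoint from $\{x_1,\dots,x_p\}$ — in particular near the endpoints, where $\widetilde P$ does not vanish — $\widetilde P(k/m)$ is bounded away from $0$, so for $m$ large $\operatorname{sign}\beta_k^{(m)}=\operatorname{sign}\widetilde P(k/m)$; hence no sign change of $(\beta_k^{(m)})$ occurs strictly between two consecutive roots, and crossing $x_i$ gives one sign change if $\mu_i$ is odd and none if $\mu_i$ is even. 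The remaining sign changes live in shrinking windows around each $x_i$: there $\widetilde P(x)=c_i(x-x_i)^{\mu_i}(1+o(1))$, $\widetilde P''$ has the sign of $c_i$, and the $O(1/m)$ correction has the sign of $-c_i$; analysing this model (for example $\widetilde P(x)=(x-\tfrac12)^2$ has degree-$m$ Bernstein coefficients $\tfrac{m}{m-1}\big(\tfrac{k}{m}-\tfrac12\big)^2-\tfrac{1}{4(m-1)}$, which are negative precisely for $|k-\tfrac m2|<\tfrac{\sqrt m}{2}$ and thus yield exactly two sign changes) shows that the coefficients near $x_i$ undergo exactly $\mu_i$ sign changes once $m$ is large. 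Summing, $V_m=\sum_i\mu_i=R(P)$ for all large $m$, so $L=R(P)$; this proves (ii), and then (i) follows with $Q(t)=(t+1)^{N}$ for $N$ large enough.

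\textbf{The main obstacle} is exactly this last step for roots of multiplicity $\ge 2$, and especially even multiplicity: one must show such a root still forces its full multiplicity of sign changes in the limit, even though $\widetilde P$ itself does not change sign there. This requires the first-order term of the Bernstein expansion and the sign information $\operatorname{sign}\widetilde P''=\operatorname{sign}c_i$ near a local extremum, plus a careful count in the $|k/m-x_i|\lesssim m^{-1/\mu_i}$ window; by contrast the simple-root case, the monotonicity, and the lower bound $V_m\ge R(P)$ are routine. (An alternative route for the limit: factor $P=P_+\cdot g$ with $P_+=\prod_i(t-\rho_i)$ collecting the positive roots and $g$ having none; by P\'olya's theorem $(t+1)^n g$ has all positive coefficients for $n$ large, and one then shows multiplication by $P_+$ reinstates exactly $R(P)=\deg P_+$ sign changes — but making this rigorous needs the log-concavity of those coefficients for large $n$ and the control of its behaviour under the linear factors, which amounts to essentially the same asymptotic bookkeeping.)
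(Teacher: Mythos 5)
First, a point of reference: the paper does not prove this theorem at all --- it is quoted from Avenda\~no \cite{Avendano2010} and used as a black box (the surrounding text even notes that the $Q$ of part (i) in the cited source is constructed from all the roots of $P$, so it is a root-based $Q$ rather than the $(t+1)^N$ you propose; your deduction of (i) from (ii) is nevertheless a legitimate existence argument). So your proposal can only be judged on its own merits. Several pieces of it are correct and standard: the identification of the coefficients of $(t+1)^nP$ with the degree-$(n+d)$ Bernstein coefficients of $\widetilde P(x)=(1-x)^dP\bigl(\tfrac{x}{1-x}\bigr)$, the variation-diminishing property of $(b_k)\mapsto(b_{k-1}+b_k)$ giving monotonicity, and the lower bound $S((t+1)^nP)\ge R(P)$ from Descartes' rule.

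The genuine gap is exactly where you locate it, but it is more serious than your sketch suggests: the two-term Bernstein asymptotic you rely on provably gives the \emph{wrong} count at roots of multiplicity $\mu\ge 4$. Near such a root the two-term model is $u^{\mu}-\tfrac{c}{m}u^{\mu-2}=u^{\mu-2}\bigl(u^{2}-\tfrac{c}{m}\bigr)$ with $c>0$, whose sign pattern across the window is $+,-,-,+$ for even $\mu$, i.e.\ only two sign changes instead of $\mu$. The missing changes are created only at higher order: for $\widetilde P(x)=(x-\tfrac12)^4$ a direct computation gives $\beta^{(m)}_{m/2}=\tfrac{3}{16m^{2}}+O(m^{-3})>0$, so a positive island of about $0.37\sqrt{m}$ indices sits inside the negative ring $\sqrt{(3-\sqrt6)/(4m)}<|k/m-\tfrac12|<\sqrt{(3+\sqrt6)/(4m)}$, and the four sign changes come from a cancellation that is invisible at the order $O(m^{-1})$ you work to. In general one needs the expansion to order $m^{-\lceil\mu/2\rceil}$ together with a uniform argument that the remainder cannot insert spurious \emph{pairs} of sign changes near the zero-crossings of the truncated model (the parity constraint $S\equiv R\pmod 2$ combined with $S\ge R$ does not exclude $S=R+2$, so an exact upper bound is genuinely required). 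That uniform, all-orders bookkeeping is essentially the entire content of the theorem, and neither your Bernstein route nor the alternative factorization route you mention in passing (which needs log-concavity of the coefficients of $(t+1)^ng$ and its stability under multiplication by the linear factors $(t-\rho_i)$) is carried out. As it stands the proposal establishes monotone convergence to some limit $L\ge R(P)$ of the same parity, but not $L=R(P)$.
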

The polynomial $Q$ in part (i) involves all the roots of $P$ (even the imaginary ones), which are not known in general, hence part (i) is practically inefficient. The sequence $\{S((t+1)^n P(t))\}_{n}$ can be easily computed, but it only can be used for approximating $R(P)$.
Note that for $P(t)=c_{d+1} t^{d+1}+\ldots+c_1 t+c_0$, we have
$$
(t+1)^n P(t)=\sum_{j=0}^n\sum_{i=0}^{d+1} c_i\begin{pmatrix}
n\\j
\end{pmatrix}t^{i+j}=\sum_{k=0}^{n+d+1}\sum_{i=0}^{d+1} c_i\begin{pmatrix}
n\\k-i
\end{pmatrix}t^k.
$$
Thus thus $k$-th coefficient of $(t+1)^n P(t)$ is
\begin{equation}
\label{eq: c2}
a^k_n=\sum_{i=0}^{d+1} c_i \begin{pmatrix}
n\\k-i
\end{pmatrix}.
\end{equation}
\begin{corollary}
\label{cor: sn}
 Let $s_n$ be the number of changes of signs in the sequence $\{a^k_n\}_{k=0}^{n+d+1}$ defined in \eqref{eq: c2}. Then the  number $N$ of equlibria of a $d$-player two-strategy game is
\begin{equation}
N=R(P)=\lim_{n\rightarrow \infty} s_n.
\end{equation}
\end{corollary}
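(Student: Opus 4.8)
Since this statement is a corollary, the plan is to assemble it from two facts already in place in the excerpt: the reduction of the equilibrium count to a count of positive roots of $P$, and part (ii) of Theorem \ref{thm: Decartes1}. The first step is to record the identity $N = R(P)$. This is precisely the observation made just after \eqref{eq: c}: the substitution $t = \frac{x}{1-x}$ used to pass from \eqref{eq: equilibria1} to \eqref{eq: P} is an increasing bijection of $x \in (0,1)$ onto $t \in (0,+\infty)$, so the interior equilibria of the $d$-player two-strategy replicator--mutator dynamics \eqref{eq: RME2} are in one-to-one correspondence with the positive roots of $P$; since $c_{d+1} = q a_{d-1}$ we also have $P \not\equiv 0$ for $q \in (0,1/2]$ with $a_{d-1} \neq 0$, so Theorem \ref{thm: Decartes1} is applicable to $P$.

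The second step is essentially notational. The expansion of $(t+1)^n P(t)$ displayed immediately before \eqref{eq: c2} shows that the coefficient sequence of $(t+1)^n P(t)$ is exactly $\{a^k_n\}_{k=0}^{n+d+1}$ with $a^k_n$ as in \eqref{eq: c2}; hence $s_n$, the number of sign changes of that sequence, coincides with $S\big((t+1)^n P(t)\big)$ in the notation of Theorem \ref{thm: Decartes1}. Applying part (ii) of that theorem to $P$, the sequence $\{s_n\}_n = \{S((t+1)^n P(t))\}_n$ is monotone decreasing with $\lim_{n\to\infty} s_n = R(P)$. Combining this with $N = R(P)$ from the first step yields $N = R(P) = \lim_{n\to\infty} s_n$, which is the assertion.

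There is no real obstacle in the limiting argument --- it is immediate once Theorem \ref{thm: Decartes1} is granted --- so the only point deserving care is the identity $N = R(P)$ behind the first step. Because $t = \frac{x}{1-x}$ does not see the endpoints $x=0$ and $x=1$, what the positive roots of $P$ count is strictly speaking the number of \emph{interior} equilibria; this equals the total number of equilibria exactly when $x=0$ and $x=1$ are not themselves equilibria, i.e. when $b_0 = f_2(0) \neq 0$ and $a_{d-1} = f_1(1) \neq 0$ (equivalently $c_0 \neq 0$ and $\deg P = d+1$), which holds for all but a measure-zero set of payoff vectors; for generic payoffs the positive roots of $P$ are moreover simple, so that counting equilibria and counting roots of $P$ with multiplicity agree. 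Under this mild standing nondegeneracy the corollary follows verbatim, and if one instead reads $N$ as the number of interior equilibria, no assumption beyond $q \in (0,1/2]$ is needed.
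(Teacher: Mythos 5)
Your proof is correct and follows essentially the same route as the paper: identify the number of equilibria with $R(P)$ via the substitution $t=\frac{x}{1-x}$, recognize $s_n$ as $S((t+1)^nP(t))$ from the displayed expansion, and invoke part (ii) of Theorem \ref{thm: Decartes1}. Your added remarks on the nondegeneracy needed for ``equilibria'' to mean exactly the positive roots of $P$ (counted with multiplicity, and excluding the endpoints $x=0,1$) are a sensible precision that the paper leaves implicit, but they do not change the argument.
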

\begin{figure}
\centering
\includegraphics[width = 0.9\linewidth]{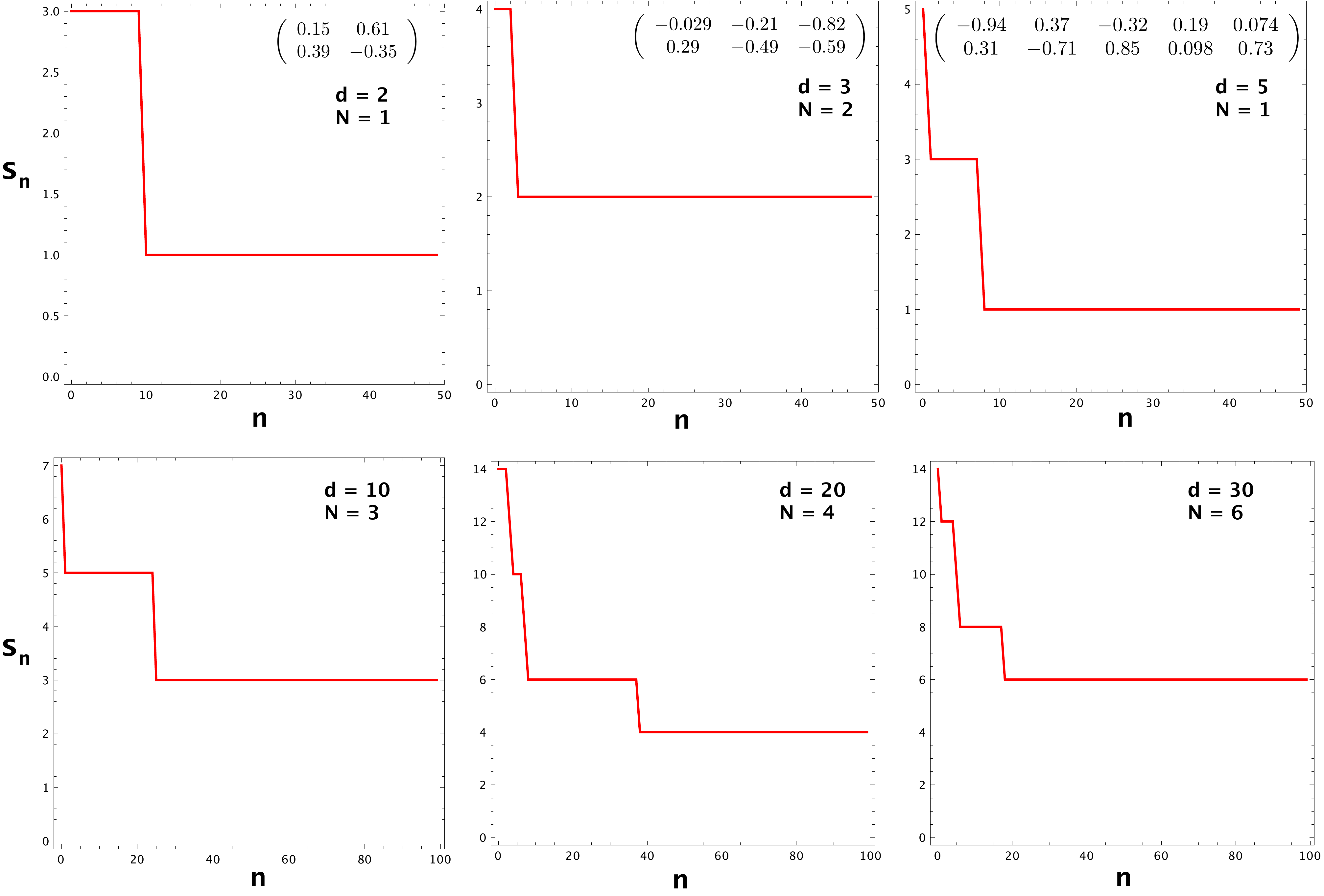}
\caption{\textbf{Plot of $s_n$ for some randomly chosen payoff matrices} (we adopted $q = 0.1$ in all cases). We indicate the number players $d$ in the game,  the payoff matrix used for small $d$ (for the sake of representation given  large sizes of the payoff matrices for large $d$), 
and the number of internal equilibria, $N$.  For sufficiently large $n$, $s_n$ decreasingly converges to the corresponding value of $N$. }
\label{fig:sn}
\end{figure} 
Corollary \ref{cor: sn} provides us with a simple method to calculate the number of equilibria, $N$, for a given $d$-player two-strategy game. In Figure \ref{fig:sn}, we show a number of examples. The value of $n$ such that $s_n$ reaches $N$ varies significantly for different games and is usually (very) large. It would be an interesting problem to find the smallest value of $n$ satisfying $s_n=N$. An upper-bound for such $n$ is also helpful. This is still  an open problem \cite{Avendano2010}. However, in the particular case when $P$ has no positive root, we have the following theorem.
\begin{theorem}\cite{Powers2001} Let $P(t)=c_{d+1} x^{d+1}+\ldots+c_1 t+c_0$.
\label{thm: Decartes3}
If $R(P)=0$ then $S((t+1)^{n_0} P(t))=0$ where
\begin{equation*}
n_0=\left\lceil \begin{pmatrix}
d+1\\2
\end{pmatrix}\frac{\max_{0\leq i\leq d+1}\big\{c_i/\begin{pmatrix}
d+1\\i
\end{pmatrix}\big\}}{\min_{\lambda\in[0,1]}\big\{(1-\lambda)^{d+1} f(\frac{\lambda}{1-\lambda})\big\}}-d-1\right\rceil.
\end{equation*}
\end{theorem}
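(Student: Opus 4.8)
I would recognize this as the binary-form case of the effective version of Pólya's theorem (the content of \cite{Powers2001}), and reconstruct its proof along the following lines.

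\textbf{Step 1: pass to a homogeneous form.} Set $D:=d+1$ and homogenize $P$ to the binary form $F(x,y):=\sum_{k=0}^{D}c_k x^k y^{D-k}$, so that $P(t)=F(t,1)$ and, for $\lambda\in[0,1]$ with $t=\lambda/(1-\lambda)$, $(1-\lambda)^{D}P(t)=F(\lambda,1-\lambda)$. The hypothesis $R(P)=0$ means $P$ has no positive root, so $F$ is non-vanishing on the relative interior of the simplex $\sigma:=\{(x,y):x,y\ge 0,\ x+y=1\}$ and hence of constant sign there; replacing $F$ by $-F$ if necessary (which changes neither $S((t+1)^nP)$ nor the displayed bound) I may assume $F>0$ on $\sigma^{\circ}$. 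Since moreover $c_0=F(0,1)\ne 0$ and $c_{d+1}=F(1,0)\ne 0$ (which one must assume for $n_0$ to be finite), continuity forces $c_0,c_{d+1}>0$ and so $F>0$ on all of $\sigma$, whence $\mu:=\min_{\sigma}F>0$ is exactly the denominator in $n_0$; write $L:=\max_{0\le i\le D}c_i/\binom{D}{i}$ for the numerator factor, which up to the sign convention is the relevant norm of $F$ controlling its Bernstein coefficients $\tilde F_i:=c_i/\binom{D}{i}$. Because the leading coefficient of $(x+y)^nF$ is $c_{d+1}>0$, it suffices to prove that for $n\ge n_0$ every coefficient of the binary form $(x+y)^nF(x,y)$ is positive.

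\textbf{Step 2: a sampling identity.} Expanding $(x+y)^nF=\sum_{a=0}^{n+D}c^{(n)}_a x^a y^{n+D-a}$ gives $c^{(n)}_a=\sum_i c_i\binom{n}{a-i}$, and a chain of Vandermonde identities rewrites this as
\begin{equation*}
\frac{c^{(n)}_a}{\binom{n+D}{a}}=\sum_{i=0}^{D}\tilde F_i\, p_{a,i},\qquad p_{a,i}:=\frac{\binom{a}{i}\binom{n+D-a}{D-i}}{\binom{n+D}{D}},
\end{equation*}
where $(p_{a,i})_{0\le i\le D}$ is the hypergeometric distribution of the number of marked elements in $D$ draws without replacement from a pool of $n+D$ containing $a$ marked elements; in particular $\sum_i p_{a,i}=1$. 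So the task reduces to showing $\sum_i\tilde F_i p_{a,i}>0$ for every $a\in\{0,\dots,n+D\}$.

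\textbf{Step 3: the estimate.} Put $M:=n+D$ and compare $p_{a,i}$ with the binomial weights $q_{a,i}:=\binom{D}{i}(a/M)^i(1-a/M)^{D-i}$, which sample the \emph{same} success probability $a/M$ but with replacement. By the Bernstein form, $\sum_i\tilde F_i q_{a,i}=F(a/M,1-a/M)\ge\mu$, since $(a/M,1-a/M)\in\sigma$. Writing $p_{a,i}=\binom{D}{i}\,a^{\underline i}(M-a)^{\underline{D-i}}/M^{\underline D}$ in falling factorials and comparing termwise with $q_{a,i}$, a careful accounting of the ratios $a^{\underline i}/a^i$, $(M-a)^{\underline{D-i}}/(M-a)^{D-i}$ and $M^{D}/M^{\underline D}$ — the computation carried out in \cite{Powers2001} — yields $\big|\sum_i\tilde F_i(p_{a,i}-q_{a,i})\big|\le \binom{D}{2}L/M$, uniformly in $a$ (the extreme cases $a\in\{0,M\}$ being trivial, with $\sum_i\tilde F_i p_{a,i}$ equal to $c_0$ or $c_{d+1}$). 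Hence $\sum_i\tilde F_i p_{a,i}\ge \mu-\binom{D}{2}L/M>0$ whenever $M>\binom{D}{2}L/\mu$, i.e. whenever $n>\binom{d+1}{2}\frac{L}{\mu}-(d+1)$. Taking $n=n_0$, the least such integer, every coefficient of $(x+y)^{n_0}F$ is positive, so all nonzero coefficients of $(t+1)^{n_0}P$ share the sign of $c_{d+1}$ and $S((t+1)^{n_0}P)=0$.

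\textbf{Main obstacle.} The delicate part is Step 3: a crude total-variation estimate of the distance between the hypergeometric and binomial weights gives a constant strictly worse than $\binom{d+1}{2}$, and squeezing out exactly this constant — together with the precise normalization playing the role of $L$ and the uniformity in $a$ — is where the real work lies. A side issue to keep in mind is genuine degeneracy: if $c_0=0$ or $c_{d+1}=0$ then $\mu=0$ and the stated $n_0$ is infinite, so one tacitly assumes $P(0)\ne 0$ and $\deg P=d+1$ (in the present application $c_0=-qb_0$ and $c_{d+1}=qa_{d-1}$, which are nonzero for $q>0$ and generic payoffs).
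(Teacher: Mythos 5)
The paper does not prove this statement --- it is imported verbatim from \cite{Powers2001} --- so there is no internal proof to compare against; your reconstruction is precisely the Powers--Reznick argument (homogenize, express the normalized coefficients of $(x+y)^nF$ as hypergeometric averages of the Bernstein coefficients, and compare with the binomial averages $F(a/M,1-a/M)\ge\mu$ via falling factorials), and it is correct. The only points worth noting are ones you already flag: the numerator should really be $\max_i\lvert c_i\rvert/\binom{d+1}{i}$ (a sign/absolute-value slip in the quoted statement), one must assume $c_0,c_{d+1}\neq 0$ so that $\mu>0$, and in the edge case where $\binom{d+1}{2}L/\mu-(d+1)$ is an integer the ceiling gives equality rather than the required strict inequality, so $n_0$ should strictly be $\lfloor\cdot\rfloor+1$.
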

\begin{corollary}
If $S((t+1)^{n_0}P(t))\geq 1$ then $R(P)\geq 1$.
\end{corollary}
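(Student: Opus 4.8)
The plan is to observe that this Corollary is nothing more than the contrapositive of Theorem~\ref{thm: Decartes3}, so the proof reduces to a short logical argument once two minor well-definedness points are settled. First I would record the trivial but essential fact that $R(P)$, being the number of positive roots of $P$ counted with multiplicity, is a non-negative integer; hence proving $R(P)\geq 1$ is equivalent to proving $R(P)\neq 0$. I would then proceed by contradiction, assuming $S((t+1)^{n_0}P(t))\geq 1$ while $R(P)=0$.

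The second step is to check that the number $n_0$ in the formula of Theorem~\ref{thm: Decartes3} is a genuine finite natural number under the hypothesis $R(P)=0$, since otherwise the statement we are invoking would be vacuous. Because the number of sign changes of a polynomial is unchanged under $P\mapsto -P$ and the two polynomials have the same positive roots, we may assume without loss of generality that the leading coefficient $c_{d+1}$ is positive. Then $R(P)=0$ forces $P(t)>0$ for every $t\geq 0$, and, writing $t=\lambda/(1-\lambda)$, the polynomial $(1-\lambda)^{d+1}P(\lambda/(1-\lambda))$ is positive on $[0,1)$ and tends to $c_{d+1}>0$ as $\lambda\to 1^-$; being continuous on the compact interval $[0,1]$ it attains a strictly positive minimum there. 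Thus the denominator in the expression for $n_0$ is strictly positive, so $n_0$ is well defined and finite.

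The final step is to apply Theorem~\ref{thm: Decartes3} directly: with $R(P)=0$ it yields $S((t+1)^{n_0}P(t))=0$, contradicting the standing assumption $S((t+1)^{n_0}P(t))\geq 1$. Hence $R(P)\neq 0$, and therefore $R(P)\geq 1$, which is precisely the assertion of the Corollary.

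I do not expect a real obstacle here; the only mild subtlety is the bookkeeping in the middle step, namely confirming that $n_0$ makes sense in the regime $R(P)=0$ where Theorem~\ref{thm: Decartes3} is applied. Alternatively, one could avoid the argument by contradiction altogether: by Theorem~\ref{thm: Decartes1}(ii) the sequence $\{S((t+1)^nP(t))\}_n$ is monotone decreasing to $R(P)$, so $S((t+1)^{n_0}P(t))\geq 1$ forces $R(P)\leq S((t+1)^{n_0}P(t))$, and Theorem~\ref{thm: Decartes3} excludes the value $R(P)=0$; together these give $R(P)\geq 1$. The contrapositive phrasing is, however, the cleanest.
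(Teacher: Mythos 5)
Your proof is correct and matches the paper's (implicit) argument exactly: the paper states this corollary without proof precisely because it is the contrapositive of Theorem~\ref{thm: Decartes3} combined with the observation that $R(P)$ is a non-negative integer. Your additional care about the well-definedness of $n_0$ when $R(P)=0$ is a reasonable refinement but not a departure from the intended reasoning.
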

\section{Properties of equilibrium points: random games}
\label{sec: random}
In this section we study random games. For two-player social dilemma games, we calculate the probability of having a certain number of equilibria when $S$ and $T$ are uniformly distributed. For multi-player games, we compute the expected number of equilibria when the payoff entries are normally distributed.
\subsection{Probability of having a certain number of equilibria in social dilemma games}
We consider two-player social dilemma games in Section \ref{sec: 2player} but $T$ and $S$ are now random variables uniformly distributed in the corresponding intervals. In this section, 
$p^G_k$, where $G \in \{SD, H, SH, PD\}$ and $k\in\{1,2,3\}$, denotes the probability of a game $G$ having $k$ equilibria. According to the analysis of Section \ref{sec: 2player}, all of the games have at least one equilibrium at the origin. In addition, the SD-game always has two equilibria, that is
$$
p_1^{SD}=p_3^{SD}=0, \quad p_2^{SD}=1.
$$
We also know that the H-game has either $1$ or $2$ equilibria. The probability that it has $1$ equilibrium is smaller than the probability that $S+T=1$. Since $S+T$ has a continuous density function, it implies that $p_1^{H}=0$. Thus we also have 
$$
p_1^{H}=p_3^{H}=0,\quad p_2^{H}=1.
$$
For the SH-game and PD-game, we are able to calculate the probability of having two equilibria explicitly since its condition on $T$ and $S$ is simple which depends only on a convex combination of $T$ and $S$.  The conditions on $S$ and $T$ for these games to have $1$ equilibrium or $3$ equilibria are much more complex since they involve $\Delta$ defined in \eqref{eq:delta}, which is a nonlinear function of $S$ and $T$.

\textbf{SH-game}. Suppose that $S\sim U([-1,0]),~~T\sim U([0,1])$. Then
\begin{align*}
&qT\sim U([0,q]), \quad f_{qT}(x)=\begin{cases}\frac{1}{q} \quad\text{if}\quad 0\leq x\leq q,\\
0 \quad\text{otherwise}
\end{cases};
\\& (1-q) S\sim U([q-1,0]),\quad f_{(1-q)S}(y)=\begin{cases}\frac{1}{(1-q)}\quad\text{if}\quad q-1\leq y\leq 0,\\
0 \quad\text{otherwise}.
\end{cases}
\end{align*}
We now compute $p_2^{SH}$ explicitly. The probability that the SH-game has two equilibria, $p_2^{SH}$, is the probability that $h(0)h(1)<0$. Since $h(1)<0$ we have
\begin{equation}
\label{eq: p2SH}
p_2^{SH}=\mathrm{Prob}(h(0)>0)=\mathrm{Prob}(qT+S(1-q)>0)=\int_0^\infty f_Z^{SH}(x)\,dx,
\end{equation}
where $f_Z^{SH}$ is the probability density function of the random variable $Z:=qT+(1-q)S$, which is given by
\begin{align*}
f_Z^{SH}(x)&=(f_{qT}\ast f_{(1-q)S})(x)=\int_{-\infty}^\infty f_{qT}(x-y) f_{(1-q)S}(y)\,dy
\\&=\frac{1}{1-q}\int_{q-1}^0 f_{qT}(x-y)\,dy
\\&\overset{(*)}{=}\frac{1}{1-q}\begin{cases}
\int_{q-1}^{x}\frac{1}{q}\,dy\quad\text{if}~~q-1\leq x\leq 2q-1,\\
\int_{x-q}^{x}\frac{1}{q}\,dy\quad\text{if}~~2q-1\leq x\leq 0,\\
\int_{x-	q}^0\frac{1}{q}\,dy\quad\text{if}~~0\leq x\leq q,\\
0\quad\text{otherwise}
\end{cases}
\\&=\frac{1}{1-q}\begin{cases}
\frac{x+1-q}{q}\quad\text{if}~~q-1\leq x\leq 2q-1,\\
1\quad\text{if}~~2q-1\leq x\leq 0,\\
\frac{q-x}{q}\quad\text{if}~~0\leq x\leq q,\\
0\quad\text{otherwise}.
\end{cases}
\end{align*}
Note that to obtain $(*)$, we use the fact that $f_{qT}(x-y)$ is $1/q$ if $0\leq x-y\leq q$ and is zero otherwise. Thus the domain of the integral is restricted to
$$
D=\{(x,y):~~ q-1\leq y\leq 0~\&~~0\leq x-y\leq q\},
$$
which gives rise to the cases in $(*)$. Substituting the formula of $f_Z$ into \eqref{eq: p2SH} we obtain
$$
p_2^{SH}=\int_0^\infty f^{SH}_Z(x)\,dx=\frac{1}{1-q}\int_0^q \frac{q-x}{q}\,dx=\frac{q}{2(1-q)}.
$$
It follows that $q\mapsto p_2^{SH}$ is an increasing function. We plot this function in Figure \ref{fig:p2}.

\textbf{PD-game}. Suppose that $T\sim U([1,2])$ and $S\sim U([-1,0])$. Then
\begin{align*}
&qT\sim U([q,2q]),\quad f_{qT}=\begin{cases}
\frac{1}{q}\quad\text{if}\quad q\leq x\leq 2q,\\
0\quad \text{otherwise};
\end{cases}
\\& (1-q) S\sim U([q-1,0]),\quad f_{(1-q)S}(y)=\begin{cases}\frac{1}{(1-q)}\quad\text{if}\quad q-1\leq y\leq 0,\\
0 \quad\text{otherwise}.
\end{cases}
\end{align*}
Similarly as in \eqref{eq: p2SH} we have
\begin{equation*}
p_2^{PD}=\int_0^\infty f_Z^{PD}(x)\,dx,
\end{equation*}
where $f_Z^{PD}$ is the probability density function of  $Z=qT+(1-q)S$. To calculate this function, we need to consider two different cases $0< q\leq 1/3$ (hence $q-1\leq -2q\leq -q<0$) and $1/3\leq q\leq 1/2$ (hence $-2q\leq q-1\leq -q<0$).   For $0< q\leq 1/3$ we have
\begin{align*}
f_Z^{PD}(x)&=(f_{qT}\ast f_{(1-q)S})(x)=\int_{-\infty}^\infty f_{qT}(x-y) f_{(1-q)S}(y)\,dy
\\&=\frac{1}{1-q}\int_{q-1}^0 f_{qT}(x-y)\,dy
\\&=\frac{1}{1-q}\begin{cases}
\int_{q-1}^{x-q}\frac{1}{q}\,dy\quad\text{if}\quad 2q-1\leq x\leq 3q-1,\\
\int_{x-2q}^{x-q}\frac{1}{q}\,dy\quad\text{if}\quad 3q-1\leq x\leq q,\\
\int_{x-2q}^0\frac{1}{q}\,dy\quad\text{if}\quad q\leq x\leq 2q,\\
0\quad\text{otherwise}
\end{cases}
\\&=\frac{1}{1-q}\begin{cases}
\frac{x+1-2q}{q}\quad\text{if}\quad 2q-1\leq x\leq 3q-1,\\
1\quad\text{if}\quad 3q-1\leq x\leq q,\\
\frac{2q-x}{q}\quad\text{if}\quad q\leq x\leq 2q,\\
0\quad\text{otherwise}
\end{cases}
\end{align*}
Hence for $0\leq q\leq 1/3$, we have
\begin{align*}
p_2^{PD}&=\int_0^\infty f_Z^{PD}(x)\,dx=\int_0^{q}f_Z^{PD}(x)\,dx+\int_{q}^{2q}f_Z^{PD}(x)\,dx=\frac{1}{1-q}\Big(\int_0^q 1\,dx+
\int_q^{2q}\frac{2q-x}{q}\,dx\Big)
\\&=\frac{3q}{2(1-q)}.
\end{align*}
\begin{figure}
\centering
\includegraphics[width = 0.6\linewidth]{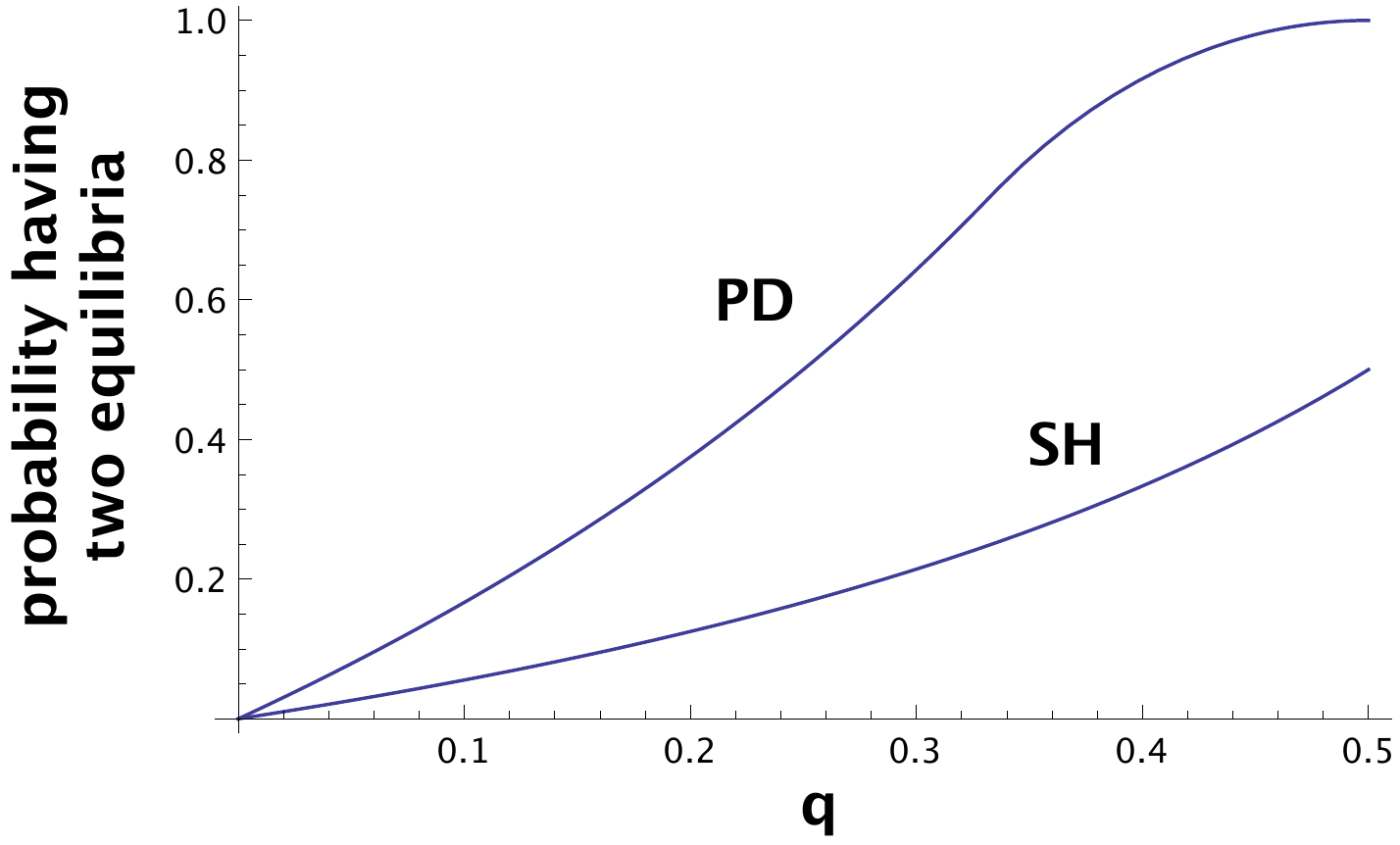}
\caption{\textbf{Probability of having two equilibrium points for Prisoner's Dilemma (PD) and Stag Hunt (SH) games}, according to analytical results obtained in Section \ref{sec: random}. Both functions are increasing; $p_2^{PD}$ is always bigger than $p_2^{SD}$; the maximum of $p_2^{PD}$ is $1$ while the maximum of $p_2^{SD}$ is $1/2$. These results also corroborate  the  simulation results using samplings in Figure \ref{fig:social-dilemmas}.  }
\label{fig:p2}
\end{figure} 
\begin{figure}
\centering
\includegraphics[width = \linewidth]{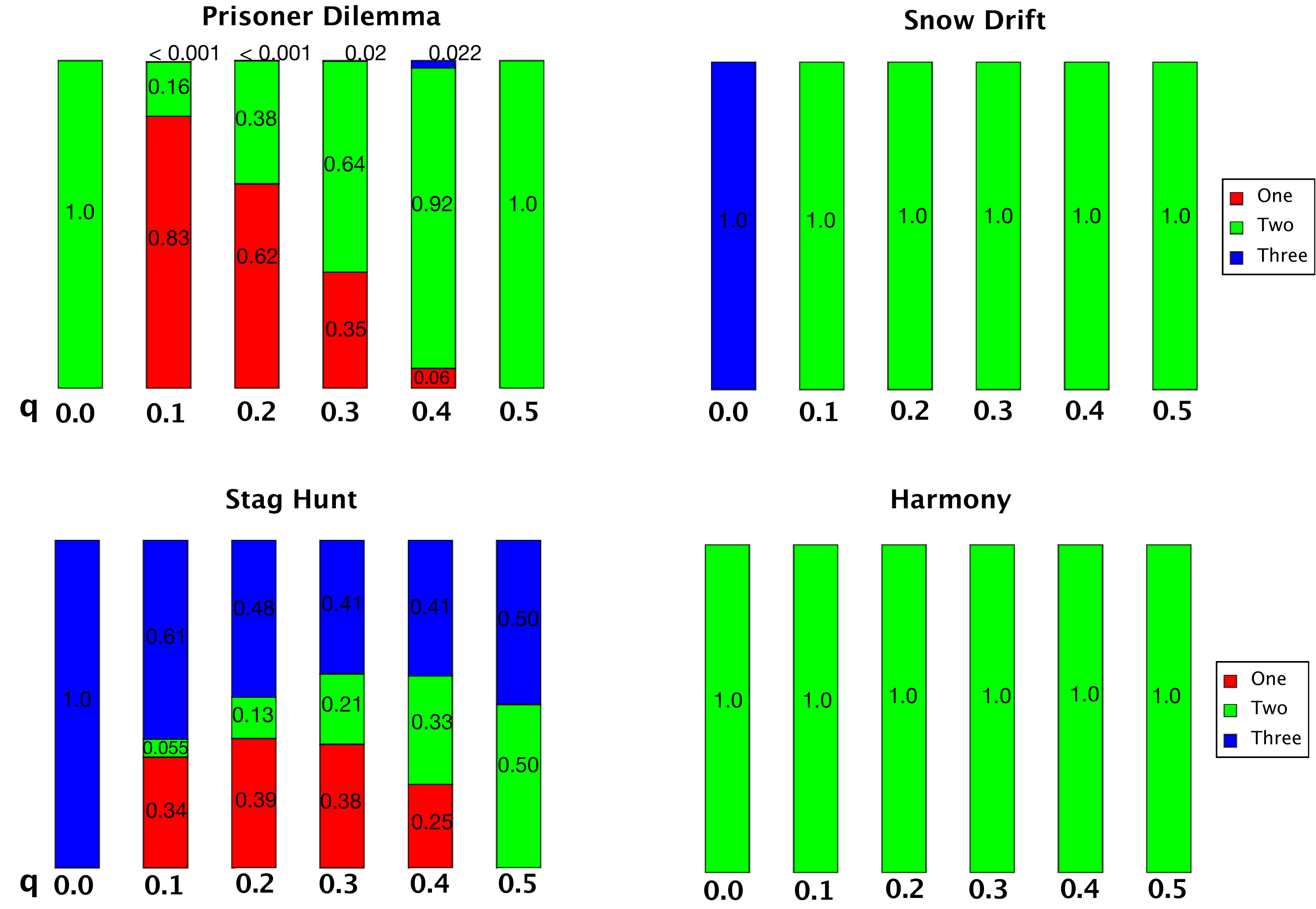}
\caption{\textbf{Probabilities of observing  a certain number of equilibrium points for each social dilemma game, for different mutation strengths, $q$}.  $S$ and $T$ are drawn from uniform distributions. The results are averaged over sampling   $10^6$ pairs of $S$ and $T$  drawn from the corresponding ranges in a social dilemma.
 All results are obtained using Mathematica. }
\label{fig:social-dilemmas}
\end{figure} 
For $1/3\leq q\leq 1/2$, we have
\begin{align*}
f_Z^{PD}(x)&=\frac{1}{1-q}\int_{q-1}^0f_{qT}(x-y)\,dy
\\&=\frac{1}{1-q}\begin{cases}
\int_{q-1}^{x-q}\frac{1}{q}\,dy\quad\text{if}\quad 2q-1\leq x\leq 3q-1,\\
\int_{x-2q}^{x-q}\frac{1}{q}\,dy\quad\text{if}\quad 3q-1\leq x\leq q,\\
\int_{x-2q}^0\frac{1}{q}\,dy\quad\text{if}\quad q\leq x\leq 2q,\\
0\quad\text{otherwise}
\end{cases}
\\&=\frac{1}{1-q}\begin{cases}
\frac{x+1-2q}{q}\quad\text{if}\quad 2q-1\leq x\leq 3q-1,\\
1\quad\text{if}\quad 3q-1\leq x\leq q,\\
\frac{2q-x}{q}\quad\text{if}\quad q\leq x\leq 2q,\\
0\quad\text{otherwise}
\end{cases}
\end{align*}
Hence for $1/3\leq q\leq 1/2$, we have
\begin{align*}
p_2^{PD}&=\int_0^\infty f_Z^{PD}(x)\,dx=\int_0^{3q-1}f_Z^{PD}(x)\,dx+\int_{3q-1}^{q}f_Z^{PD}(x)\,dx+\int_q^{2q}f_Z^{PD}(x)\,dx
\\&=\frac{1}{1-q}\Big(\int_0^{3q-1}\frac{x+1-2q}{q} \,dy+
\int_{3q-1}^q 1\,dy+\int_q^{2q}\frac{2q-x}{q}\Big)
\\&=3-\frac{1}{2q(1-q)}.
\end{align*}
In summary, we obtain
$$
p_2^{PD}=\begin{cases}
\frac{3 q}{2(1-q)}\quad \text{if}\quad 0<q\leq 1/3,\\
3-\frac{1}{2q(1-q)}\quad \text{if}\quad 1/3\leq q\leq 1/2. 
\end{cases}
$$
It follows that $q\mapsto p_2^{PD}$ is also increasing. We also plot this function in Figure \ref{fig:p2}. Moreover, in Figure \ref{fig:social-dilemmas}, we numerically compute the probability of having a certain number of equilibria for each game by averaging over $10^6$ samples of $T$ and $S$. The numerical results are in accordance with the analytical computations. In the H-game: $p_2=1$ (hence $p_1=p_3=0$) for all values of $q$. In the SD-game: when $q=0$, $p_3=1$ (hence $p_1=p_2=0$) but  $p_2=1$ (hence $p_1=p_3=0$) for all $q>0$. In the PD-game: when $q=0$, $p_2=1$ (hence $p_1=p_3=0$) but when $0<q<1/2$ all $p_1,p_2,p_3$ are positive although $p_3$ is very small; $p_2$ is increasing and attains its maximum $1$ at $q=1/2$. In the SH-game: when $q=0$, $p_3=1$ (hence $p_1=p_2=0$). When $0<q<1/2$, the picture is more diverse: all $p_1,p_2$ and $p_3$ are non-negligible; $p_2$ is increasing and attains its maximum $1/2$ at $q=1/2$. Moreover, note that for $q > 0$, there is at least one equilibrium ($x = 0$) in all cases, where the remaining ones are internal equilibria. 
To the contrary, when $q = 0$, PD and H games always  have two non-internal equilibria (at $x = 0$ and $x = 1$) while SH and SG games have three equilibria (two non-internal  and one internal). With mutation ($q > 0$), $x = 1$ is no longer an equilibrium in all cases. 
Therefore, the SD-game has the same number of internal equilibria (one) while  it  gains one more internal equilibrium in H game. 
In the PD-game, the probability of having at least one internal equilibrium increases with $q$. 
In the SH-game, the probability of having two internal (i.e. gaining one more compared to the no mutation case) is  high. 
 In short, except for the SD game, introducing mutation leads to the probability of gaining an additional internal equilibrium (thus increasing behavioural diversity) in all social dilemmas. This probability is 100\% in the H-game, increases with $q$ in the PD-game (reaching 100\% when $q = 0.5$) and  is roughly 40-60\% in the SH-game.

\subsection{Expected number of equilibria of multi-player two strategy games}
We recall that finding an equilibrium point of the replicator-mutator dynamics for $d$-player two-strategy games is equivalent to finding a positive root of the polynomial \eqref{eq: P} with coefficients given in \eqref{eq: c}. In this section, by employing techniques from random polynomial theory, we provide explicit formulas for the computation of the expected number of internal equilibrium points of the replicator-mutator dynamics  where the entries of the payoff matrix are random variables, thus extending our previous results for the replicator dynamics \cite{DH15,DuongHanJMB2016,DuongTranHanJMB,DuongTranHanDGA}. We will apply the following result on the expected number of positive roots of a general random polynomial.
\begin{theorem}\cite[Theorem 3.1]{EK95}
\label{thm: EK95}
Consider a random polynomial
$$
Q(x)=\sum_{i=0}^n \alpha_k x^k,
$$
where $\{\alpha_k\}_{0\leq k\leq n}$ are the elements of a multivariate normal distribution with mean zero and covariance matrix $C$. Then the expected number of positive roots of $Q$ is given by
\begin{equation}
\label{eq: expected roots}
E_Q=\frac{1}{\pi}\int_{0}^\infty\Big(\frac{\partial^2}{\partial x\partial y}\big(\log v(x)^T C v(y)\big)\big\vert_{y=x=t}\Big)^\frac{1}{2}\,dt,
\end{equation}
where
\begin{equation*}
v(x)=\begin{pmatrix}
1\\x\\ \vdots\\ x^n
\end{pmatrix},\quad v(y)=\begin{pmatrix}
1\\y\\ \vdots \\ y^n
\end{pmatrix}.
\end{equation*}
\end{theorem}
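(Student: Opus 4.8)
The statement is a cited result (the Edelman--Kostlan formula), so the plan is to derive it from the Kac--Rice formula applied to the smooth Gaussian process $t\mapsto Q(t)$. First I would record that, since $Q(t)=v(t)^{T}\alpha$ with $\alpha\sim N(0,C)$, the pair $(Q(t),Q'(t))$ is jointly mean-zero Gaussian with
\[
A(t):=\mathbb{E}[Q(t)^2]=v(t)^{T}Cv(t),\quad B(t):=\mathbb{E}[Q(t)Q'(t)]=v'(t)^{T}Cv(t),\quad D(t):=\mathbb{E}[Q'(t)^2]=v'(t)^{T}Cv'(t),
\]
where $A(t)>0$ for all but at most $n$ values of $t>0$ (the curve $v$ meets the proper subspace $\ker C$ in at most $n$ points), which is all that is needed. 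The Kac--Rice formula for the intensity of zeros of a $C^1$ Gaussian process then gives
\[
E_Q=\int_0^\infty \mathbb{E}\big[\,|Q'(t)|\ \big|\ Q(t)=0\,\big]\,p_{Q(t)}(0)\,dt,
\]
with $p_{Q(t)}$ the Gaussian density of $Q(t)$.

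Second, I would evaluate the integrand by elementary Gaussian conditioning. One has $p_{Q(t)}(0)=(2\pi A(t))^{-1/2}$, and conditionally on $Q(t)=0$ the variable $Q'(t)$ is Gaussian with mean $0$ and variance $D(t)-B(t)^2/A(t)\ge 0$ (nonnegativity by Cauchy--Schwarz, so the square root below is real), whence $\mathbb{E}[|Q'(t)|\mid Q(t)=0]=\sqrt{2/\pi}\,\big(D(t)-B(t)^2/A(t)\big)^{1/2}$. Multiplying, the constants combine to $\sqrt{2/\pi}\cdot(2\pi)^{-1/2}=1/\pi$, and the $t$-integrand collapses to $\tfrac1\pi\big(\tfrac{A(t)D(t)-B(t)^2}{A(t)^2}\big)^{1/2}$. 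Third, I would identify this with the stated log-derivative: writing $K(x,y):=v(x)^{T}Cv(y)$, one computes $\partial_x\partial_y\log K=\tfrac{\partial_x\partial_y K}{K}-\tfrac{(\partial_x K)(\partial_y K)}{K^2}$, and specializing at $x=y=t$ with $K(t,t)=A(t)$, $\partial_x K|_{x=y=t}=\partial_y K|_{x=y=t}=B(t)$, $\partial_x\partial_y K|_{x=y=t}=D(t)$ gives exactly $\tfrac{D(t)}{A(t)}-\tfrac{B(t)^2}{A(t)^2}$; substituting yields the claimed integral formula.

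The main obstacle is the rigorous justification of the Kac--Rice step together with the interchange of expectation and integration: one must verify that $Q$ is almost surely $C^1$ (immediate, it is a polynomial), that for a.e.\ $t$ the event $\{Q(t)=Q'(t)=0\}$ is null (true whenever the $(A,B,D)$-covariance is nondegenerate), and — most delicate — integrability of the intensity near $t=0$, near $t=\infty$, and near the finitely many points where $A(t)=0$, which is handled by the polynomial growth bounds on $v,v'$ and a dominated-convergence argument. An alternative, arguably cleaner route I would keep in reserve is the geometric one of Edelman--Kostlan: write $Q(x)=\langle \xi, w(x)\rangle$ with $w=C^{1/2}v$ and $\xi$ standard normal, note that zeros of $Q$ are the intersections of the curve $x\mapsto[w(x)]\in\mathbb{RP}^n$ with the random hyperplane $\xi^{\perp}$, and invoke the integral-geometry identity that the expected number of such intersections equals $\tfrac1\pi$ times the length of the curve in the metric pulled back from the round sphere, the length element being precisely $\big(\partial_x\partial_y\log(w(x)\cdot w(y))\big)^{1/2}dx=\big(\partial_x\partial_y\log(v(x)^{T}Cv(y))\big)^{1/2}dx$.
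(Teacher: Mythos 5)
The paper offers no proof of this statement: it is imported verbatim as Theorem 3.1 of the cited reference [EK95], so there is nothing internal to compare against. Your Kac--Rice derivation is correct and is in fact one of the two standard proofs given in that reference (the other being the integral-geometric one you sketch at the end): the Gaussian conditioning, the constant $\sqrt{2/\pi}\cdot(2\pi)^{-1/2}=1/\pi$, and the identification of $\frac{D}{A}-\frac{B^2}{A^2}$ with $\partial_x\partial_y\log\bigl(v(x)^TCv(y)\bigr)\big\vert_{x=y=t}$ all check out, and you correctly flag the only genuinely delicate points (a.s.\ absence of double zeros, degeneracy where $v(t)\in\ker C$ at finitely many $t$, and integrability of the intensity).
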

Defining
$$
H(x,y)=\sum_{i,j=0}^n C_{ij} x^i y^j, \quad M(t)=H(t,t),\quad A(t)=\partial^2_{xy} H(x,y)\vert_{y=x=t}, \quad B(t)=\partial_x H(x,y)\vert_{y=x=t},
$$
then $E_Q$ can be written as
\begin{equation}
\label{eq: E}
E_Q=\frac{1}{\pi}\int_0^\infty \frac{\sqrt{A(t)M(t)-B(t)^2}}{M(t)}\,dt.
\end{equation}
We now apply Theorem \ref{thm: EK95} to the random polynomial $P$ given in \eqref{eq: P} and obtain formulas for the expected number of equilibria of the replicator-mutator dynamics for $d$-player two-strategy games. It turns out that the case $q=0.5$ needs special treatment since according to Remark \ref{rem: q=1/n} $x=1/2$ is always an equilibrium point.
\subsubsection{The case $q\neq 0.5$}
Suppose that $a_k$ and $b_k$ are independent standard normally distributed random variables with mean zero. Then, for $q\neq \frac{1}{2}$, the random vector $\c=\{c_0,\ldots, c_{d+1}\}$ defined in \eqref{eq: c} has a (symmetric) covariance matrix $C=(C_{ij})_{0\leq i,j\leq d+1}$ given by
\begin{align*}
& C_{kk}=\begin{cases}
q^2~~\text{for}~~k=0,\\
2 (q-1)^2+q^2(d-1)^2~~\text{for}~~k=1,\\
q^2 \begin{pmatrix}
d-1\\k-2
\end{pmatrix}^2+2(q-1)^2\begin{pmatrix}
d-1\\k-1
\end{pmatrix}^2+q^2\begin{pmatrix}
d-1\\k
\end{pmatrix}^2~~\text{for}~~k=2,\ldots, d-1,\\
2(q-1)^2+ q^2(d-1)^2~~\text{for}~~k=d,\\
q^2 ~~\text{for}~~k={d+1};
\end{cases}
\\& C_{k k+1}=\begin{cases}
q(q-1)~~\text{for}~~k=0,\\
q(q-1)+q(q-1)(d-1)^2~~\text{for}~~k=1,\\
q(q-1)\begin{pmatrix}
d-1\\k-1
\end{pmatrix}^2+q(q-1)\begin{pmatrix}
d-1\\k
\end{pmatrix}^2~~\text{for}~~k=2,\ldots, d-2,\\
q(q-1)(d-1)^2+q(q-1)~~\text{for}~~k=d-1,\\
q(q-1)~~\text{for}~~k=d;
\end{cases}
\\& C_{i j}=0~~\text{for}~~0\leq i<j\leq d+1~: j-i\geq 2.
\end{align*}
Using the convention that whenever $k<0$ or $k>n$ then $
\begin{pmatrix}
n\\k
\end{pmatrix}=0
$, we can simplify $C$ as
\begin{align}
& C_{kk}=q^2 \begin{pmatrix}
d-1\\k-2
\end{pmatrix}^2+2(q-1)^2\begin{pmatrix}
d-1\\k-1
\end{pmatrix}^2+q^2\begin{pmatrix}
d-1\\k
\end{pmatrix}^2~~\text{for}~~ k=0,\ldots, d+1, \label{eq: Ckk}
\\& C_{kk+1}=q(q-1)\begin{pmatrix}
d-1\\k-1
\end{pmatrix}^2+q(q-1)\begin{pmatrix}
d-1\\k
\end{pmatrix}^2, ~~\text{for}~~ k=0,\ldots, d, \label{eq: Ckk+1}
\\& C_{i j}=0~~\text{for}~~0\leq i<j\leq d+1~: j-i\geq 2. \label{eq: Cij}
\end{align}
Applying Theorem \ref{thm: EK95} we obtain the following result.
\begin{proposition}
Suppose that $a_k$ and $b_k$ are independent standard normally distributed random variables with mean zero and that $q\neq 0.5$. We define
\begin{align*}
H(x,y)&=\sum_{k=0}^{d+1}C_{kk}x^k y^k+\sum_{k=0}^d C_{kk+1}(x^k y^{k+1}+x^{k+1}y^k),\\
M(t)&=H(t,t),\quad A(t)=\partial^2_{xy}H(x,y)\big\vert_{y=x=t},\quad B(t)=\partial_{x}H(x,y)\big\vert_{y=x=t},
\end{align*}
where the coefficient $C_{ij}, 0\leq i,j \leq d+1$ are given in \eqref{eq: Ckk}, \eqref{eq: Ckk+1} and \eqref{eq: Cij}. Then the expected number of equilibria of a $d$-player two-strategy replicator-mutator dynamics is given by
$$
E=\frac{1}{\pi}\int_0^\infty \frac{\sqrt{A(t)M(t)-B^2(t)}}{M(t)}\,dt.
$$
\end{proposition}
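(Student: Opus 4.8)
The plan is to reduce the claim to Theorem~\ref{thm: EK95} applied to the random polynomial $P$ of \eqref{eq: P}. First I would recall that the substitution $t=\frac{x}{1-x}$ used to derive \eqref{eq: P} is a bijection from $(0,1)$ onto $(0,\infty)$, so each equilibrium point $x\in(0,1)$ of \eqref{eq: RME2} corresponds to exactly one positive root of $P$; since $q\neq 0$, neither $x=0$ nor $x=1$ solves \eqref{eq: equilibria1}, and $c_0=-qb_0\neq 0$ almost surely so $P$ has no root at $t=0$. Hence the expected number of equilibria equals $\mathbb{E}[R(P)]$, the expected number of positive roots of $P$.

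Next I would observe that, by \eqref{eq: c}, each $c_k$ is a fixed linear combination of the independent standard normal random variables $\{a_i\}$ and $\{b_i\}$, so $\c=(c_0,\dots,c_{d+1})$ is a centred Gaussian vector and Theorem~\ref{thm: EK95} applies with $n=d+1$. The computational heart of the proof is the covariance matrix $C=(C_{ij})$. Inspection of \eqref{eq: c} shows that $c_k$ only involves $a_{k-2},a_{k-1},b_{k-1},b_k$ (with the convention $\binom{n}{k}=0$ whenever $k<0$ or $k>n$), so $c_k$ and $c_\ell$ share no common variable when $|k-\ell|\ge 2$, whence $C_{k\ell}=0$ there and $C$ is tridiagonal. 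Computing $\mathbb{E}[c_k^2]$ and $\mathbb{E}[c_kc_{k+1}]$ by expanding and using $\mathbb{E}[a_i^2]=\mathbb{E}[b_i^2]=1$ together with independence yields \eqref{eq: Ckk}--\eqref{eq: Cij}; the same binomial convention conveniently merges the boundary cases $k\in\{0,1,d,d+1\}$ of \eqref{eq: c} into the uniform formulas stated there.

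It then remains to rewrite the integrand of \eqref{eq: expected roots}. Since $v(x)^{T}Cv(y)=\sum_{i,j}C_{ij}x^iy^j=H(x,y)$, the elementary identity $\partial_x\partial_y\log H=\frac{H\,\partial_x\partial_yH-(\partial_xH)(\partial_yH)}{H^{2}}$, evaluated at $y=x=t$ and using the symmetry $C_{ij}=C_{ji}$ (so that $\partial_xH|_{y=x=t}=\partial_yH|_{y=x=t}=B(t)$, $H(t,t)=M(t)$ and $\partial_x\partial_yH|_{y=x=t}=A(t)$), gives $\partial_x\partial_y\log\big(v(x)^{T}Cv(y)\big)\big|_{y=x=t}=\frac{A(t)M(t)-B(t)^{2}}{M(t)^{2}}$; taking the square root produces the claimed integrand $\frac{\sqrt{A(t)M(t)-B(t)^{2}}}{M(t)}$, which is legitimate because $M(t)=\mathbb{E}[P(t)^{2}]>0$ for every $t>0$ when $q\neq\frac12$.

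I expect the main obstacle to be the covariance bookkeeping --- verifying that all cross-terms outside the tridiagonal band cancel and that the boundary indices collapse exactly into \eqref{eq: Ckk}--\eqref{eq: Cij} --- together with confirming the positivity $M(t)>0$ on $(0,\infty)$ that makes the integral well defined. This last point is precisely what singles out the present case: for $q=0.5$ the point $x=\frac12$ (equivalently $t=1$) is a deterministic equilibrium, $P(1)=0$ almost surely, and $\c$ becomes a degenerate Gaussian vector, which is why that case is treated separately in the sequel.
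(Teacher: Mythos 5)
Your proposal is correct and follows essentially the same route as the paper: compute the covariance matrix of the Gaussian vector $\c$ from \eqref{eq: c} (tridiagonal because $c_k$ and $c_\ell$ share no payoff variables for $|k-\ell|\ge 2$), then apply Theorem \ref{thm: EK95} with $v(x)^T C v(y)=H(x,y)$ to get the $A,B,M$ form of the integrand. The paper itself gives no more than this, so your additional remarks (the bijectivity of $t=x/(1-x)$, the a.s.\ absence of roots at $x=0,1$, and the degeneracy $M(1)=0$ explaining the separate treatment of $q=0.5$) are accurate supplements rather than deviations.
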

\begin{figure}
\centering
\includegraphics[width = 0.8\linewidth]{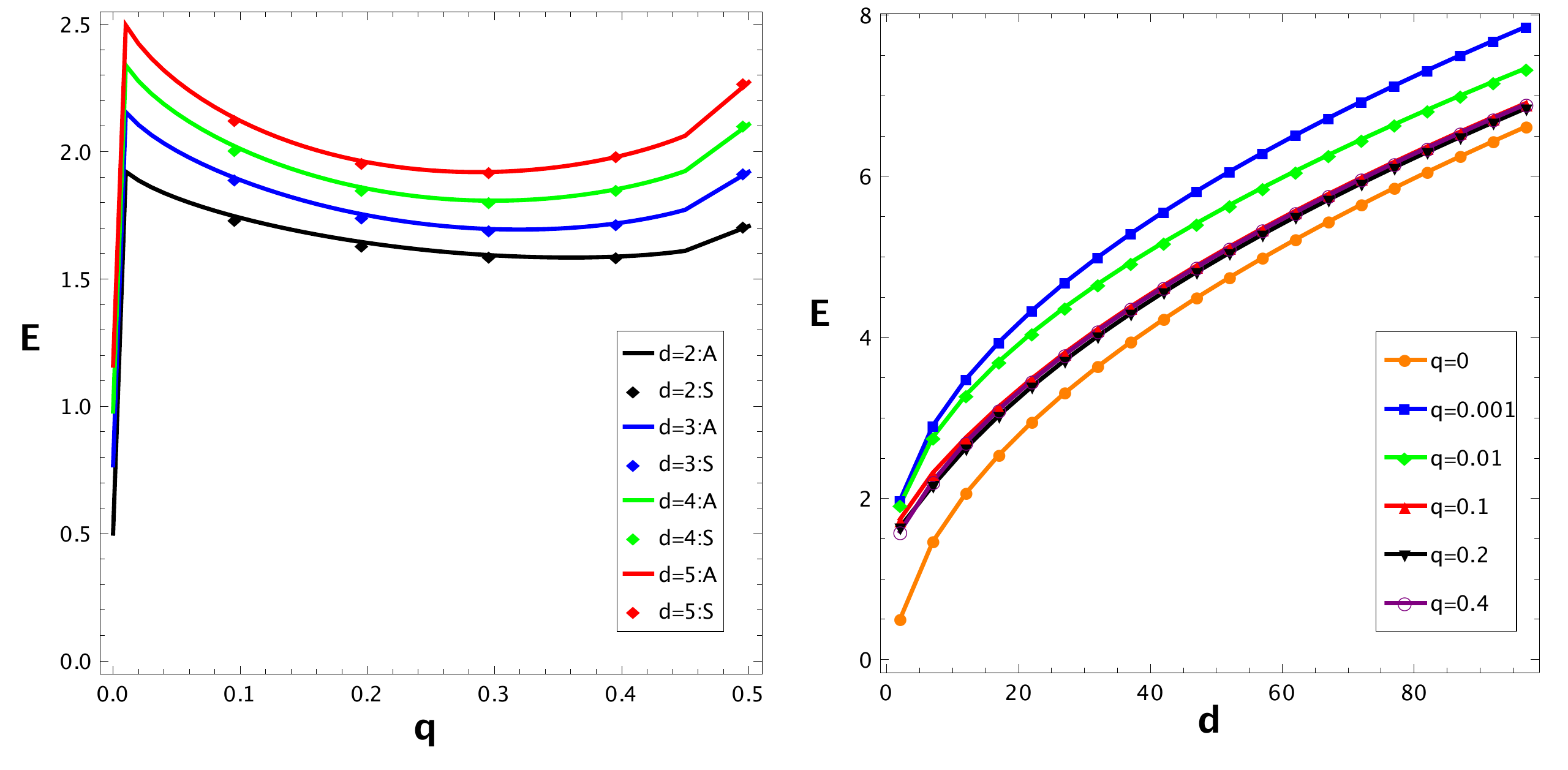}
\caption{(\textit{Left panel}) \textbf{Analytical vs. simulation sampling results of the average number of internal equilibrium points ($E$)} for varying $q$ and for different values of $d$.
 The solid lines are generated from analytical (\textbf{A}) formulas of $E$.   The solid diamonds capture simulation (\textbf{S}) results obtained by averaging over $10^6$ samples of the payoff entries (normal distribution). Analytical and simulations results are in accordance with each other.
(\textit{Right panel}) \textbf{Plot of $E$} for increasing $d$ and for different values of $q$. In general,  $E$ increases with $d$. $E$ is always larger when $q > 0$  than when $q = 0$. Also, $E$ is largest  when $q$ is close to 0 (i.e. rare mutation).  
 All results are obtained using Mathematica. }
\label{fig:pm theory vs samplings}
\end{figure}

\subsubsection{The case $q=0.5$}
The case $q=0.5$ needs to be treated differently since in this case, according to Remark \ref{rem: q=1/n},  $x=1/2$ is always an equilibrium. Other equilibrium points are roots of the average fitness of the whole population $\bar{f}(x)=0$ due to Remark \ref{rem: q=1/n}, that is
\begin{equation*}
0=\bar{f}(x)=x f_1(x)+(1-x)f_2(x)\overset{\eqref{eq: fitness}}{=}\sum_{k=0}^{d-1}a_k\begin{pmatrix}
d-1\\k
\end{pmatrix} x^{k+1} (1-x)^{d-1-k}+\sum_{k=0}^{d-1}b_k\begin{pmatrix}
d-1\\k
\end{pmatrix} x^{k} (1-x)^{d-k}.
\end{equation*}
Since $x=1$ is not a solution, by dividing the right-hand side of the above equation by $(1-x)^d$, and let $t:=\frac{x}{1-x}$ then we obtain the following equation
\begin{align*}
P(t)&=\sum_{k=0}^{d-1}a_k\begin{pmatrix}
d-1\\k
\end{pmatrix} t^{k+1}+\sum_{k=0}^{d-1}b_k\begin{pmatrix}
d-1\\k
\end{pmatrix} t^{k}
\\&=\sum_{k=0}^d\left[a_{k-1}\begin{pmatrix}
d-1\\k-1
\end{pmatrix}+b_k\begin{pmatrix}
d-1\\k
\end{pmatrix}\right] t^k
\\&=:\sum_{k=0}^d c_k t^k,
\end{align*}
where $$ 
c_k=a_{k-1}\begin{pmatrix}
d-1\\k-1
\end{pmatrix}+b_k\begin{pmatrix}
d-1\\k
\end{pmatrix},\quad\text{for}~k=0,\ldots, d.
$$
Suppose that $a_k$ and $b_k$ are independent standard normally distributed random variables with mean zero. Then the random vector $\c=\{c_0,\ldots, c_{d+1}\}$ has a (symmetric) covariance matrix $C=(C_{ij})_{0\leq i,j\leq d+1}$ given by
\begin{equation*}
C_{ij}=\left(\begin{pmatrix}
d-1\\k-1
\end{pmatrix}^2+\begin{pmatrix}
d-1\\k
\end{pmatrix}^2\right)\, \delta_{ij},
\end{equation*}
where $\delta_{ij}$ is the Kronecker delta. Applying Theorem \ref{thm: EK95} and noticing that $x=1/2$ is always an equilibrium,  we obtain the following result.
\begin{proposition}
Suppose that $a_k$ and $b_k$ are independent standard normally distributed random variables with mean zero and that $q=0.5$. We define
\begin{align*}
H(x,y)&=\sum_{k=0}^{d}\left(\begin{pmatrix}
d-1\\k-1
\end{pmatrix}^2+\begin{pmatrix}
d-1\\k
\end{pmatrix}^2\right)x^k y^k,\\
M(t)&=H(t,t),\quad A(t)=\partial^2_{xy}H(x,y)\big\vert_{y=x=t},\quad B(t)=\partial_{x}H(x,y)\big\vert_{y=x=t},
\end{align*}
Then the expected number of equilibria of a $d$-player two-strategy replicator-mutator dynamics is given by
$$
E=1+\frac{1}{\pi}\int_0^\infty \frac{\sqrt{A(t)M(t)-B^2(t)}}{M(t)}\,dt.
$$
\end{proposition}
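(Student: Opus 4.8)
The plan is to reduce everything to Theorem~\ref{thm: EK95}, exactly as in the case $q\neq 1/2$, but with one equilibrium peeled off in advance. By Remark~\ref{rem: q=1/n}, when $q=\tfrac12$ the right-hand side of \eqref{eq: RME2} factors as $\bigl(\tfrac12-x\bigr)\f(x)$, where $\f(x)=xf_1(x)+(1-x)f_2(x)$, so the equilibrium set is $\{\tfrac12\}\cup\{x\in[0,1]:\f(x)=0\}$. The first step is to rewrite $\f(x)=0$ as a polynomial equation: since $\f(1)=a_{d-1}\neq 0$ almost surely, dividing by $(1-x)^d$ and putting $t=\tfrac{x}{1-x}$ produces $P(t)=\sum_{k=0}^d c_k t^k$ with $c_k=a_{k-1}\binom{d-1}{k-1}+b_k\binom{d-1}{k}$, and the bijection $t\mapsto \tfrac{t}{1+t}$ of $(0,\infty)$ onto $(0,1)$ shows that the number of zeros of $\f$ in $(0,1)$ equals the number $R(P)$ of positive roots of $P$.

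Second, I would compute the law of $\c=(c_0,\dots,c_d)$. Each $c_k$ is a fixed linear combination of the two independent standard normals $a_{k-1}$ and $b_k$, hence $\c$ is a centred Gaussian vector; since $a_{k-1}$ and $a_{l-1}$ (resp.\ $b_k$ and $b_l$) are the same variable only when $k=l$, and no $a$-variable ever coincides with a $b$-variable, the off-diagonal covariances vanish and $\operatorname{cov}(c_k,c_k)=\binom{d-1}{k-1}^2+\binom{d-1}{k}^2$. So the covariance matrix $C$ is precisely the diagonal matrix in the statement, and applying Theorem~\ref{thm: EK95} in the reformulation \eqref{eq: E}, with $H(x,y)=\sum_{k=0}^d\bigl(\binom{d-1}{k-1}^2+\binom{d-1}{k}^2\bigr)x^k y^k$, yields $\mathbb{E}\bigl[R(P)\bigr]=\tfrac1\pi\int_0^\infty\tfrac{\sqrt{A(t)M(t)-B(t)^2}}{M(t)}\,dt$.

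Third, assemble the count. Almost surely $\f$ does not vanish at $x=\tfrac12$ (the event $\f(\tfrac12)=0$ is a single hyperplane in the Gaussian payoff vector, hence null), does not vanish at the endpoints ($\f(0)=b_0$ and $\f(1)=a_{d-1}$ are a.s.\ nonzero), and $P$ has degree exactly $d$ with only simple roots; hence the total number of equilibria is $1+R(P)$ almost surely. Taking expectations and substituting the integral from the previous step gives $E=1+\tfrac1\pi\int_0^\infty\tfrac{\sqrt{A(t)M(t)-B(t)^2}}{M(t)}\,dt$, as claimed.

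The one place that needs care — and, I expect, the main obstacle — is this last bookkeeping step: one has to certify that the deterministic equilibrium $x=\tfrac12$ is genuinely distinct from the random roots of $P$ and that no root escapes to the boundary or merges into a multiple root, so that the additive ``$+1$'' is exactly correct before passing to the expectation. All of these are null events under the Gaussian assumption, so excluding them is routine, but it is the conceptual reason this case cannot simply be folded into the $q\neq 1/2$ formula; everything else is a verbatim specialisation of Theorem~\ref{thm: EK95}.
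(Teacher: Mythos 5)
Your proposal follows essentially the same route as the paper: factor the right-hand side as $(\tfrac12-x)\f(x)$ via Remark~\ref{rem: q=1/n}, transform $\f(x)=0$ into the degree-$d$ polynomial $P(t)=\sum_k c_kt^k$ with $c_k=a_{k-1}\binom{d-1}{k-1}+b_k\binom{d-1}{k}$, observe that the $c_k$ are independent centred Gaussians with the stated diagonal covariance, and apply Theorem~\ref{thm: EK95} before adding $1$ for the deterministic equilibrium at $x=\tfrac12$. The only difference is that you spell out the almost-sure non-degeneracy (that $x=\tfrac12$, the endpoints, and multiple roots are null events), which the paper leaves implicit; this is correct and harmless.
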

In Figure \ref{fig:pm theory vs samplings} we show that the results obtain from analytical formulas of $E$ corroborate with those obtained from numerical simulations by averaging over a large number of randomly generated payoff matrices. Figure \ref{fig:pm theory vs samplings} also reveals that the expected number of equilibria exhibits several interesting behaviours. We will elaborate more on this point in Section \ref{sec: summary}. 
\begin{figure}
\centering
\includegraphics[width = 0.5\linewidth]{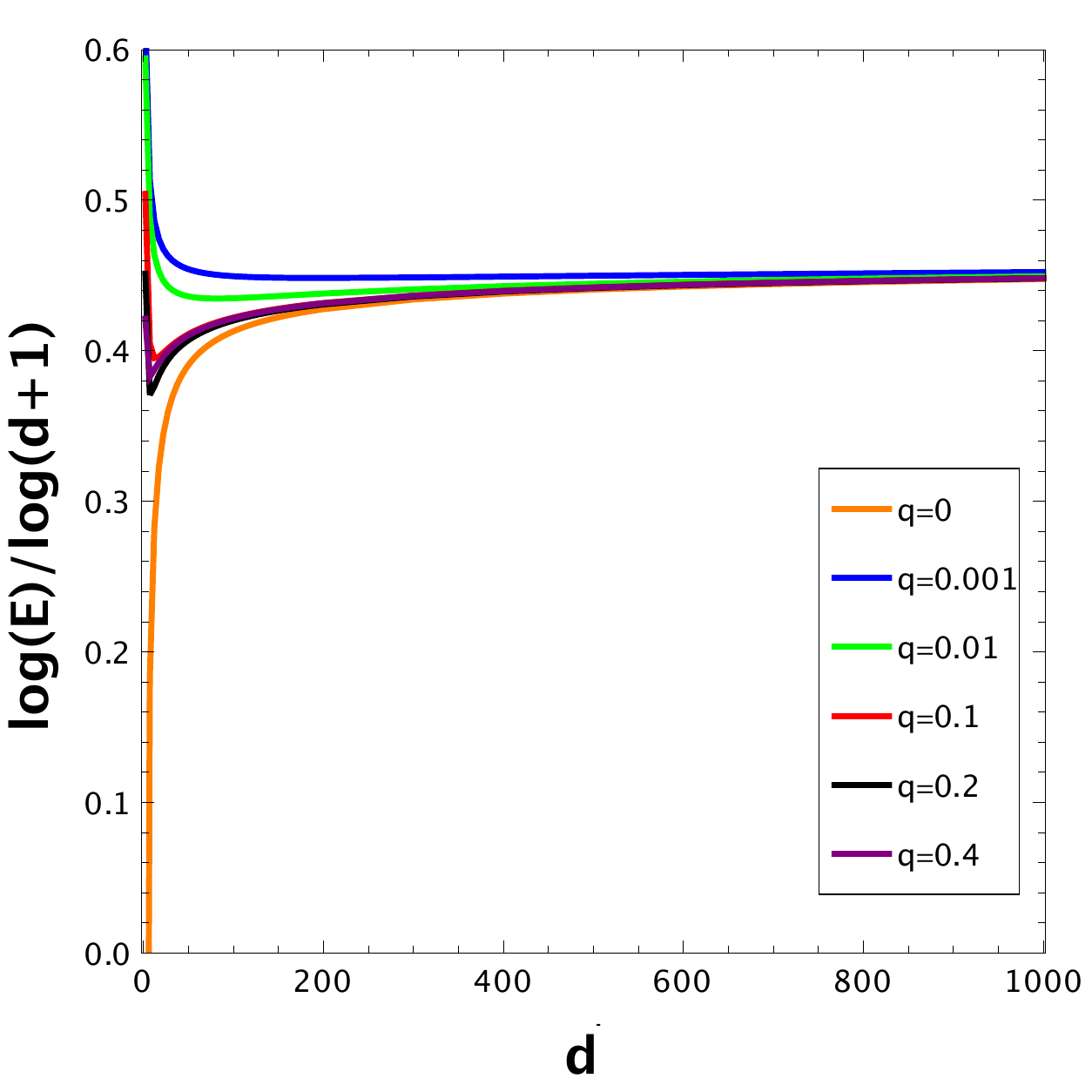}
\caption{\textbf{Plot of $log(E)/log(d+1)$ for varying $d$}. For different values of $q$, this quantity converges to the same value. 
 All results are obtained using Mathematica. }
\label{fig:logE}
\end{figure} 
\section{Conclusion and outlook}
\label{sec: summary}
Understanding equilibrium properties of the replicator-mutator dynamics for multi-player multi-strategy games is a difficult problem due to its complexity: to find an equilibrium, one needs to solve a system of multivariate polynomials. In this paper,  employing techniques from classical and random polynomial theory, we study the number of equilibria for both deterministic and random two-strategy games. For deterministic games,  using Decartes' rule of signs and its recent developments, we provide a method to compute the number of equilibria via the sign changes of the coefficients of a polynomial. For two-player social dilemma games, we compute the probability of observing a certain number of equilibria when the payoff entries are uniformly distributed. For multi-player two-strategy random games whose payoffs are independently distributed according to a normal distribution, we obtain explicit formulas to compute the expected number of equilibria by relating it to the expected number of positive roots of a random polynomial. We also perform numerical simulations to compare with and to illustrate our analytical results. We observe that $E$   is always larger in the presence of mutation (i.e. when $q > 0$) than when mutation is absent (i.e. when $q = 0$), implying  that mutation leads to larger behavioural diversity in a dynamical system (see again Figure \ref{fig:pm theory vs samplings}). Interestingly, $E$ is largest  when $q$ is close to 0 (i.e. rare mutation), rather than when it is  large. 
In general, our findings might have important implications for the understanding of social and biological diversities, where biological mutations and behavioural errors are present, i.e. in the study of evolution of cooperative behaviour and population fitness distribution \cite{Levin2000,pena2012group, santos2012role}. Furthermore, numerical simulations also suggest a number of open problems that we leave for future work.

\textit{Asymptotic behaviour of the expected number of equilibria when the number of players tends to infinity.} In \cite{DuongHanJMB2016}, we proved that 
\begin{equation}
\label{eq: replicator limit}
\lim\limits_{d\rightarrow\infty}\frac{\ln E(d)}{\ln(d-1)}=\frac{1}{2},
\end{equation}
where $E(d)$ is the expected number of internal equilibria of the replicator dynamics for $d$-player two-strategy games, in which the payoff entries are randomly distributed. To obtain \eqref{eq: replicator limit}, we utilized several useful connections to Legendre's polynomials. In Figure \ref{fig:logE}, we plot $\frac{\ln E(q,d)}{\ln (d+1)}$, where $E(q,d)$ is the expected number of equilibria for the replicator-mutator dynamics, as a function of $d$ for various values of $q$. We observe that they all converge to the same limit as $d$ tends to infinity, but in different manner: for $q=0$, it increasingly approaches  the limit while for $q > 0$ sufficiently small, at first they are decreasing and then for sufficiently large $d$, they also increasingly approach to the limit. Thus, it is expected that there is a phase transition. Proving this rigorously would be an interesting problem. The method used in \cite{DuongHanJMB2016} seems not to be working since there is no direct connections to Legendre's polynomials.

\textit{Asymptotic behaviour of the expected number of equilibria when the mutation tends to zero.} The classical replicator dynamics is obtained from the replicator-mutator dynamics by setting the mutation to be zero. Thus it is a natural question to ask how a certain quantity (such as the expected number of equilibria) behaves when the mutation tends to zero. Both Figures \ref{fig:pm theory vs samplings} and  \ref{fig:logE} demonstrate that the expected number of equilibria changes significantly when the mutation is turned on. In addition, using explicit formulas of the probability of observing two equilibria for the SH-game and the PD-game obtained in Section \ref{sec: random}, we clearly see a jump when $q$ approaches zero:
$$
\lim_{q\rightarrow 0}p_2^{q, SH,PD}=0\neq 1=p_2^{0,SH,PD}.
$$
Both observations suggest that these quantities exhibit singular behaviour at $q=0$. Characterizing this behaviour would be a challenging problem for future work.

\textit{Bifurcation phenomena of the replicator-mutator dynamics for multi-player games}. \\
In~\cite{Pais2012}, the authors proved Hopf bifurcations for the replicator-mutator dynamics with $d=2$ and $n\geq 3$ and characterized the existence of stable limit cycles using an analytical derivation of the Hopf bifurcations points and the corresponding first Lyapunov coefficients. In addition, they also showed that the limiting behaviors
are tied to the structure of the fitness model. Another interesting topic for further research would be to extend the results of \cite{Pais2012} to multi-player games.
\section*{Acknowledgment}  We would like to thank anonymous referees for useful suggestions which help us  improve the presentation of the paper, in particular Remark \ref{re: Bernstein} was suggested to us by one of the referee. T.A.H. also acknowledges support from Future of Life Institute (grant RFP2-154).
\bibliographystyle{alpha}
\newcommand{\etalchar}[1]{$^{#1}$}

\end{document}